\theoremstyle{plain}
\newtheorem{theorem}{Theorem}[section]
\newtheorem{lemma}[theorem]{Lemma}
\newtheorem{proposition}[theorem]{Proposition}
\newtheorem{observation}[theorem]{Observation}
\newtheorem{fact}[theorem]{Fact}
\theoremstyle{definition}
\newtheorem{definition}[theorem]{Definition}
\newtheorem{problem}[theorem]{Problem}
\newtheorem*{question*}{Question}
\newcommand{\R}{\mathbb{R}}
\newcommand{\N}{\mathbb{N}}
\newcommand{\cal}[1]{\mathcal{#1}}
\newcommand{\inte}{\mathrm{int}\:}
\renewcommand{\epsilon}{\varepsilon}
\renewcommand{\phi}{\varphi}
\renewcommand{\tilde}{\widetilde}
\newcommand{\B}{\mathcal{B}}
\newcommand{\lo}{\longrightarrow}
\newcommand{\conv}{\overline{\rm conv}}
\subjclass[2010]{Primary 46B20; Secondary 54D20}
\keywords{covering of normed space, point-finite covering, uniformly smooth space, uniformly rotund space}
\title{A note on point-finite coverings by balls}
\author[C.A.~De~Bernardi]{Carlo Alberto De Bernardi}
\address{Dipartimento di Matematica per le Scienze economiche, finanziarie ed attuariali, Universit\`a Cattolica del Sacro Cuore, 20123 Milano,Italy}
\email{carloalberto.debernardi@unicatt.it}\email{carloalberto.debernardi@gmail.com}
\begin{document}
\begin{abstract}
	
	We provide an elementary proof of a result by
	V.P.~Fonf and C.~Zanco on point-finite coverings of separable Hilbert spaces. Indeed, by using a variation of the famous argument introduced by
	J.~Lindenstrauss
	and R.R.~Phelps \cite{LP} to prove that the unit ball of a reflexive infinite-dimensional Banach space has uncountably many extreme points, we prove the following result.

		\smallskip
	
	{\em 
		Let $X$ be an infinite-dimensional Hilbert space satisfying $\mathrm{dens}(X)<2^{\aleph_0}$, then $X$ does not admit  point-finite coverings by open or closed
		balls, each of positive radius.
	}
	\smallskip

	\noindent In the second part of the paper, we follow the argument introduced by  
	V.P.~Fonf, M.~Levin, and C.~Zanco    in \cite{FonfLevZan14} to prove that the previous result holds also in  infinite-dimensional Banach spaces that are both uniformly rotund and uniformly smooth.

%\noindent In particular, this partially solves a problem posed by V.~Klee in 1981 \cite{Klee1}.
\end{abstract}

\maketitle

%%%%%%%%%%%%%%%%%%%%%%%%%%%%%%%%%%%%%%%%%%%%%%%%%%%%%%

%%%%%%%%%%%%%%%%%% Headings
\markboth{Carlo Alberto De Bernardi}{A note on point-finite coverings by balls.}
%%%%%%%%%%%%%%%%%%%%%%%%%%%

\section{introduction}

A family of subsets of a real normed space $X$ is called a {\em covering} if the union of all its members coincides with $X$.
A covering of $X$ is {\em point-finite} if each point of $X$ is contained in at most finitely many members of the covering.

The problem concerning existence of point-finite coverings of infinite-di\-mensional normed spaces by balls was  considered for the first time in the paper \cite{Klee1} in which V.~Klee asked the following question.

\begin{problem}[{\cite[Question~2.6]{Klee1}}]\label{Klee'sProblem}	Let $\Gamma$ be a cardinal such that $|\Gamma|\geq\aleph_0$, does $\ell_1(\Gamma)$ (respectively $\ell_p(\Gamma)$ for $1<p<\infty$) admit a locally finite (respectively point-finite) covering by closed balls or open balls, each of positive radius?
	\end{problem}

The question above was motivated by the results, contained in the paper itself, implying existence  of a   covering of $\ell_1(\Gamma)$ by  pairwise disjoint closed balls of radius $1$, whenever $\Gamma$ is a suitable uncountable set. 
In \cite{FZ06}, V.P.~Fonf and C.~Zanco generalized Corson's theorem (see Theorem~\ref{corson} below) by proving that {\em if a Banach space $X$ contains an infinite-dimensional closed subspace non-containing $c_0$ then $X$ does not admit any locally finite covering by bounded closed convex bodies}. This completely solved the problem concerning locally finite coverings by balls of $\ell_1(\Gamma)$. 

 More recently, 
V.P.~Fonf and C.~Zanco \cite{FZHilbert} proved that {\em the infinite-di\-mensional separable Hilbert space does not admit point-finite coverings by closed balls of positive radius}. Then V.P.~Fonf, M.~Levin and C.~Zanco \cite{FonfLevZan14} extended the  result above to separable spaces that are both uniformly smooth and uniformly rotund. However, Klee's  problem about point-finite coverings by  balls of $\ell_p(\Gamma)$ spaces ($1<p<\infty$) remained  open in the non-separable case.

 The proof of the result by V.P.~Fonf and C.~Zanco, contained in \cite{FZHilbert},  is based on the following ingredients:
\begin{enumerate}
	\item \cite[Proposition~2.1]{FZHilbert}, a result  excluding existence of certain point-finite families of slices of the unit ball in separable Banach space;
	\item  \cite[Theorem~3.1]{FZHilbert}, a  characterization of separable
	isomorphically polyhedral Banach spaces via existence of point-finite countable coverings by slices of the unit sphere;
	\item the fact that the intersection among two distinct spheres in any Hilbert space lies in some
	hyperplane. Indeed, this is a 3-dimensional  characterization of  inner product spaces  \cite[(15.17)]{Ami};
	\item the fact that  no infinite-dimensional
	dual (and in particular reflexive) Banach space is polyhedral \cite{Lindenstrauss}.
	\end{enumerate} 
  The aim of the present paper is to provide a direct and quite elementary proof of the main result contained in \cite{FZHilbert} and to present an improvement of the result contained in \cite{FonfLevZan14},  concerning point-finite coverings by balls of Banach spaces that are both uniformly smooth and uniformly rotund. Let us start by describing the result contained in Section~\ref{section-point-finite}. Our Proposition~\ref{uniformboundedness} is a  restatement of \cite[Proposition~2.1]{FZHilbert}, the elementary alternative  proof presented in our paper is an immediate application of the uniform boundedness
principle and it works also in the non-separable case.  Theorem~\ref{teo:FollowingLindenstraussPhelps},  excludes existence of certain point-finite families of open or closed slices of the unit ball in reflexive Banach spaces, and it is 
 a variation of the famous argument introduced by
J.~Lindenstrauss
and R.R.~Phelps \cite{LP} to prove that the unit ball of a reflexive infinite-dimensional Banach space has uncountably many extreme points.
Theorem~\ref{teo:FollowingLindenstraussPhelps}, combined with (iii), allows us to obtain the following  slight improvement of \cite[Corollary~3.3]{FZHilbert}.
\smallskip

{\em 
Let $X$ be an infinite-dimensional Hilbert space.
\begin{enumerate}
	\item If  the density character of $X$ satisfies $\mathrm{dens}(X)<2^{\aleph_0}$ then it does not admit  point-finite coverings by open or closed
	balls, each of positive radius.
	\item $X$ does not admit point-finite coverings  by open balls. 
\end{enumerate}
}
\medskip 

Finally,  in Section~\ref{section:appendix}, we observe that, following the argument introduced in \cite{FonfLevZan14}, it is possible to extend this latter result to Banach spaces that are both uniformly smooth and uniformly rotund.  The new ingredients in our proof are  
%Proposition~\ref{uniformboundedness}, the non-separable version of
%\cite[Proposition~2.1]{FZHilbert}, and 
Lemma~\ref{lemma:interiorconvexhull}, that allows us to deal with  open and closed balls at the same time, and an easy separable reduction argument used in Theorem~\ref{teo:unifrotundsmooth}.
 In particular, our results solve in negative Klee's problem for point-finite coverings by open balls of $\ell_p(\Gamma)$ spaces ($1<p<\infty$).

\section{Point-finite coverings by slices and balls in Hilbert spaces}\label{section-point-finite}

Throughout the paper, we consider only nontrivial real normed spaces. If $X$ is a normed space then $X^*$ is its dual Banach space. We denote by $B_X$, $U_X$, and $S_X$ the closed unit ball, the open unit ball, and the unit sphere of $X$, respectively. We denote by $U(x,\epsilon)$  the open  ball with radius $\epsilon> 0$ and center $x$.
We denote by $B(x,\epsilon)$  the closed  ball with radius $\epsilon\geq 0$ and center $x$; in the case $\epsilon=0$, $B(x,\epsilon)$ is the {\em degenerate ball} containing only the point $x$. In general, by a {\em ball} in $X$ we mean a closed ball of non-negative radius or an open ball of positive radius in $X$.
 For $x,y\in X$, $[x,y]$ denotes the closed segment in $X$ with
endpoints $x$ and $y$, and $(x,y)=[x,y]\setminus\{x,y\}$ is the
corresponding ``open'' segment.
 A set $B\subset X$ will be called a {\em body} if it is closed, convex and has nonempty
interior. A body is called {\em rotund} if its boundary does not contain nontrivial segments.
%If $B\subset X$ is a ball then $c(B)$ and $r(B)$ denote its center and radius, respectively.
 Other notation is standard, and various topological notions refer to the norm topology of $X$, if not specified otherwise.

Let $\mathcal{F}$ be a family of nonempty sets in a normed space $X$. By $\bigcup \mathcal F$ we mean the union of all members of $\mathcal F$.
A point $x\in X$ is a {\em regular point} for $\mathcal F$ if it has a neighbourhood that meets at most finitely many members of $\mathcal F$. 
Points that are not regular are called {\em singular}. 
Notice that the set of  singular points is a closed set.

\begin{definition}\label{D:point-locally-finite} The family $\mathcal F$ is called:
	\begin{enumerate}
					\item {\em point-finite}  if each $x\in X$ is contained in at most finitely many members of $\mathcal F$;
		\item {\em locally finite} if each $x\in X$ is a regular point for $\mathcal F$.   
	\end{enumerate}
\end{definition}

\noindent A {\em minimal covering} is a covering whose no proper subfamily is a covering.  A standard application of  Zorn's lemma shows that {\em every point-finite covering contains a minimal subcovering.}

In the sequel, we say that $\mathcal F$ is {\em a family of open or closed balls} of $X$ if  each  element of $\mathcal F$ is an open ball (of positive radius) or a closed ball of non-negative radius (i.e., if not differently stated, we admit that $\mathcal F$ contains also degenerate balls). 

 Let us recall the following famous theorem by H.H.~Corson \cite{Cor61}. 

\begin{theorem}\label{corson}
	Let $\mathcal F$ be a covering of a reflexive infinite-dimensional  Banach space by bounded convex sets. Then $\mathcal F$ is not	locally finite.
\end{theorem}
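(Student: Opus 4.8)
The plan is to argue by contradiction, using that in a reflexive space every closed bounded convex set is weakly compact.

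\emph{Reductions.} Suppose $\mathcal F$ is a locally finite covering of $X$ by bounded convex sets. Replacing each member by its closure keeps the family a covering and keeps it locally finite, since an open set disjoint from a set is disjoint from its closure; so I may assume every member is closed, hence (being bounded, convex, and $X$ reflexive) weakly compact. As a locally finite family is point-finite, I may also pass to a minimal subcovering, which remains locally finite. Finally, in a minimal locally finite covering by closed sets every member $C$ has nonempty interior: the set $\bigcup(\mathcal F\setminus\{C\})$ is a locally finite union of closed sets, hence closed, and by minimality it cannot contain $C$; so a point $v\in C$ lying outside it has a neighbourhood $N$ disjoint from $\bigcup(\mathcal F\setminus\{C\})$, and then $N=N\cap\bigcup\mathcal F=N\cap C\subseteq C$. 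Thus each member has a nonempty open \emph{private region} contained in it and missing every other member, and it suffices to contradict the existence of such a minimal, locally finite covering of $X$ by bounded closed convex bodies.

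\emph{Core.} The private regions form a pairwise disjoint family of nonempty open sets, and the aim is to show that the bodies cannot fit together: one fixes $C_0\in\mathcal F$ with $0\in\inte C_0$, moves from the private region of $C_0$ to its boundary (reached within a fixed distance, as $C_0$ is bounded), enters a member meeting $\partial C_0$ and passes into its private region, and repeats, producing a sequence of points lying in infinitely many pairwise distinct members and contained in a bounded subset of $X$. By reflexivity some subsequence converges weakly to a point $x_\infty$, and one then tries to show that $x_\infty$ is a singular point of $\mathcal F$: every norm neighbourhood of it meets infinitely many members. This contradicts local finiteness.

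The step I expect to be the main obstacle is precisely this last one, together with the need to keep the constructed sequence bounded. Reflexivity delivers only \emph{weak} convergence $x_n\to x_\infty$, whereas local finiteness is a statement about the norm topology, and Mazur's theorem only places $x_\infty$ in the norm closure of \emph{convex combinations} of the $x_n$ — combinations that in general lie in no single member. Bridging this gap is the genuine content of Corson's theorem, and in \cite{Cor61} it is handled by a more careful construction forcing the visited bodies themselves to accumulate at $x_\infty$ in the norm sense, rather than by the naive weak extraction sketched above. It is worth recording that both hypotheses are used essentially: reflexivity only for the weak compactness, and infinite-dimensionality because $\R^n$ does admit locally finite coverings by balls, for instance the unit balls centred at the lattice points.
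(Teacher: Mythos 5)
Your reductions are fine and standard: passing to closures preserves local finiteness (an open set disjoint from a set is disjoint from its closure) and the covering property; a locally finite family is point-finite, so a minimal subcovering exists; and in a minimal locally finite covering by closed sets each member has a nonempty private open region, by exactly the argument you give. But the proof stops there. Your ``core'' is not an argument: you construct a bounded sequence of points lying in pairwise distinct members, extract a weak cluster point $x_\infty$, and then need $x_\infty$ to be a singular point --- and, as you yourself observe, weak convergence gives no control whatsoever over which \emph{norm} neighbourhoods of $x_\infty$ the visited bodies meet, so no contradiction with local finiteness (a norm-topological hypothesis) is obtained. Explicitly deferring ``the genuine content of Corson's theorem'' to \cite{Cor61} means the statement has not been proved. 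For what it is worth, the paper itself offers no proof either: Theorem~\ref{corson} is quoted as Corson's theorem and used as a black box (e.g.\ to get $S\neq\emptyset$ in the proof of Theorem~\ref{teo:unifrotundsmooth}), so there is no internal argument to compare yours against.

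If you want to close the gap, the way reflexivity is actually exploited is not through weak limits of point sequences but through the finite intersection property: in a reflexive space a decreasing sequence of nonempty bounded closed convex sets has nonempty intersection. One restricts to a separable infinite-dimensional closed subspace, where local finiteness makes the trace family countable, say $\{C_n\}$, and then builds inductively a decreasing sequence of nonempty bounded closed convex sets $D_k$ with $D_k\cap C_j=\emptyset$ for $j\le k$ (cutting with a separating half-space at each step, the delicate point being to guarantee $D_{k-1}\not\subset C_k$, which is where infinite-dimensionality and local finiteness enter). A point of $\bigcap_k D_k$ then lies in no member, contradicting the covering property. That construction, not the weak extraction you sketch, is the heart of the theorem.
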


In what follows, we shall use several times the following fact that  immediately follows  by \cite[Lemma~2.2]{DESOVESTAR}. Let us recall that, if $T$ is a topological vector space, $\mathrm{dens}(T)$ denotes its density character (i.e., the smallest cardinality of a dense subset of $T$).

\begin{fact}\label{density} Let $T$ be a topological space and let $\B$ be a point-finite family of subsets of
$T$. Let us denote $\B':=\{B\in\B;\, \inte B\neq\emptyset\}$, then
$|\B'|\leq\mathrm{dens}(T)$.
\end{fact}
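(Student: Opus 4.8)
The plan is to fix a dense set $D\subseteq T$ with $|D|=\mathrm{dens}(T)$ and to build a selector $\Phi\colon\B'\to D$ as follows: for each $B\in\B'$ the interior $\inte B$ is a nonempty open subset of $T$, so density of $D$ gives $D\cap\inte B\neq\emptyset$, and we choose $\Phi(B)\in D\cap\inte B$. The heart of the argument will be the observation that $\Phi$ is finite-to-one. Indeed, if $\Phi(B)=d$ for some $d\in D$, then in particular $d\in\inte B\subseteq B$, so every $B$ in the fibre $\Phi^{-1}(d)$ is a member of $\B$ that contains the point $d$; by point-finiteness of $\B$ there are only finitely many such members, so $\Phi^{-1}(d)$ is finite. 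Consequently $\B'=\bigcup_{d\in D}\Phi^{-1}(d)$ is a union of at most $|D|$ finite sets.

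It then remains to carry out the cardinal arithmetic. In all the applications in this paper $T$ is an infinite-dimensional normed space, so $\mathrm{dens}(T)\geq\aleph_0$, and then $|\B'|\leq|D|\cdot\aleph_0=|D|=\mathrm{dens}(T)$, as claimed. (If one wishes to allow $T$ finite, the statement is to be read with the standard convention $\mathrm{dens}(T):=\max\{\mathrm{dens}(T),\aleph_0\}$, or simply restricted to the case in which $\B'$ is infinite; the two-point Sierpi\'nski-type examples show some such caveat is unavoidable.)

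The only delicate point is the finite-to-one property of the selector, and this is precisely the place where the point-finiteness hypothesis is used in an essential way — local finiteness alone would not suffice, since a point might lie in the closure of infinitely many members while belonging to only finitely many of them. Apart from that there is no real obstacle; in fact the statement is an immediate reformulation of \cite[Lemma~2.2]{DESOVESTAR}, and one could alternatively just quote it from there.
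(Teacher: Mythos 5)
Your proof is correct and is exactly the standard argument behind this fact: the paper itself gives no proof, deferring to \cite[Lemma~2.2]{DESOVESTAR}, and your selector $\Phi\colon\B'\to D$ with finite fibres is precisely the intended reasoning; the caveat about finite $\mathrm{dens}(T)$ is well taken and harmless in all the paper's applications. One small correction to your closing aside: in this paper's terminology local finiteness \emph{implies} point-finiteness (a point lies in every one of its neighbourhoods), so local finiteness would certainly suffice here --- the hypothesis that genuinely cannot be weakened is point-finiteness itself, e.g.\ to star-finiteness or to ``almost disjoint'' conditions. This slip does not affect the proof.
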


\noindent The following proposition is a restatement of \cite[Proposition~2.1]{FZHilbert}. The elementary alternative proof presented here below is an immediate consequence of the uniform boundedness principle and it works also in the non-separable case.

\begin{proposition}\label{uniformboundedness} Let $X$ be a Banach space.
Let $D\subset X^*$ be an unbounded set. For each $f\in D$, define $S_f=\{x\in X;\, f(x)\geq 1\}$. Then there exist $x\in S_X$ and
an infinite set $N\subset D$ such that $x\in \inte S_f$,
 whenever $f\in N$.
\end{proposition}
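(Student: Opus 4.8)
The plan is to apply the uniform boundedness principle to a suitable countable subfamily of $D$. First I would argue that, since $D\subset X^*$ is unbounded, there is a sequence $(f_n)_{n\in\N}$ in $D$ with $\|f_n\|\to\infty$; passing to a subsequence we may assume $\|f_n\|\to\infty$ strictly increasing, in particular $\|f_n\|\geq 1$ for all $n$. Consider the normalized functionals $g_n:=f_n/\|f_n\|\in S_{X^*}$. Now pick $x_n\in S_X$ with $g_n(x_n)>1-2^{-n}$, say. The key observation is that $x\in\inte S_f$ is equivalent to $f(x)>1$, and more quantitatively, if $f(x)>1+\delta$ for some $\delta>0$ then the whole ball $U(x,\delta/\|f\|)$ lies in $S_f$; so it suffices to find a single $x\in S_X$ and infinitely many $n$ with $f_n(x)>1$.

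The heart of the argument is the following reasoning. Suppose, for contradiction, that no such $x$ exists, i.e. for every $x\in S_X$ the set $\{n\in\N;\, f_n(x)>1\}$ is finite. Equivalently, for every $x\in S_X$ we have $\limsup_n f_n(x)\leq 1$, hence $\sup_n f_n(x)<\infty$ for each $x\in S_X$, and therefore $\sup_n f_n(x)<\infty$ for each $x\in X$ by homogeneity. By the uniform boundedness principle (Banach--Steinhaus), $\sup_n\|f_n\|<\infty$, contradicting $\|f_n\|\to\infty$. Hence there does exist $x\in S_X$ with $f_n(x)>1$ for infinitely many $n$; setting $N:=\{n;\, f_n(x)>1\}$ and using that $f_n(x)>1$ implies $x\in\inte S_{f_n}$ gives the conclusion.

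One slightly delicate point is the passage from ``$\{n;\,f_n(x)>1\}$ is finite for every $x\in S_X$'' to ``$\sup_n f_n(x)<\infty$ for every $x\in S_X$'': if the set is finite then the sequence $(f_n(x))_n$ takes values $>1$ only finitely often, so it is bounded above by $\max\{1,f_{n_1}(x),\dots,f_{n_k}(x)\}<\infty$; it is automatically bounded below issues do not arise because we only need $\sup_n|f_n(x)|<\infty$, which follows by applying the same argument to $-x$ (note $-x\in S_X$ as well, and $f_n(-x)=-f_n(x)$). So in fact the negation hypothesis applied at both $x$ and $-x$ yields $\sup_n|f_n(x)|<\infty$ for all $x\in S_X$, hence for all $x\in X$. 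The main obstacle, such as it is, is simply being careful that the Banach--Steinhaus hypothesis ($X$ is a Banach space, so complete) is in force — which it is by assumption — and that we have extracted a genuinely unbounded \emph{sequence} from the unbounded \emph{set} $D$, which is immediate. No separability of $X$ is used anywhere, which is exactly the advertised improvement over \cite[Proposition~2.1]{FZHilbert}.
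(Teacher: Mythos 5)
Your proof is correct and follows essentially the same route as the paper: negate the conclusion, observe that for each $x\in S_X$ (and for $-x$) the values $f(x)$, $f\in D$, are then bounded, and invoke the Banach--Steinhaus theorem to contradict the unboundedness of $D$. The only differences are cosmetic: you first extract a sequence $(f_n)\subset D$ with $\|f_n\|\to\infty$ rather than applying the uniform boundedness principle to all of $D$ at once (the auxiliary points $x_n$ with $g_n(x_n)>1-2^{-n}$ are never used), and you spell out the two-sided pointwise bound via $-x$ that the paper leaves implicit.
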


\begin{proof}
Suppose on the contrary that, for every $x\in S_X$, the set
$$N_x:=\{f\in D;\, x\in \inte S_f\}$$ is finite. Fix $x\in S_X$ and observe that, since $N_x$ is finite, the set
$x(D)\subset\R$ is upper-bounded. By the Banach-Steinhaus uniform boundedness
principle, we get a contradiction.
\end{proof}

\noindent The following theorem is the core of the results of this section and it is a variation of \cite[Theorem~1.1]{LP}, in which J.~Lindenstrauss
and R.R.~Phelps  proved that the unit ball of a reflexive infinite-dimensional Banach space has uncountably many extreme points.

\begin{theorem}\label{teo:FollowingLindenstraussPhelps} Let $X$ be an infinite-dimensional reflexive Banach space and $\{f_n\}\subset X^*\setminus U_{X^*}$. For each $n\in\N$, let  $S_n$ be one of the following two sets $$\{x\in X;\, f_n(x)\geq
1\},\ \ \ \ \ \{x\in X;\, f_n(x)>1\}.$$ Let us denote $\mathcal S=\{S_n\}_{n\in\N}$ and suppose that  $S_X\subset\bigcup \mathcal S$. Then $\mathcal S$ is not point-finite.
\end{theorem}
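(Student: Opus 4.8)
The plan is to follow the Lindenstrauss--Phelps strategy adapted to this covering situation, arguing by contradiction. Suppose $\mathcal S = \{S_n\}$ is point-finite and covers $S_X$. First I would observe that by Proposition~\ref{uniformboundedness} the set $\{f_n\}$ cannot be unbounded in $X^*$; indeed if it were, we would find $x \in S_X$ lying in the interior of infinitely many of the sets $\{y : f_n(y) \geq 1\}$, and since $S_n$ is always between $\{f_n \geq 1\}$ (as a superset, if $S_n$ is the open slice) — wait, more carefully: $x \in \inte\{y:f_n(y)\geq 1\}$ means $f_n(x) > 1$, hence $x \in S_n$ in either case, contradicting point-finiteness at $x$. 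So $M := \sup_n \|f_n\| < \infty$.

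Next, normalize: since each $f_n \notin U_{X^*}$, we have $1 \leq \|f_n\| \leq M$. Using reflexivity, pick $x_n \in S_X$ with $f_n(x_n) = \|f_n\| \geq 1$, so $x_n \in S_n$. The key geometric move, as in \cite{LP}, is to exploit that the slices $S_n$ meet $S_X$ only in ``small caps'': for $x \in S_X \cap S_n$ we have $f_n(x) \geq 1$ while $\|f_n\|\leq M$, so $x$ is forced to lie near the point where $f_n$ is maximized on $B_X$; quantitatively, $f_n\bigl(\tfrac{x+x_n}{2}\bigr) \geq 1$ but if $\|x - x_n\|$ is bounded below then, were the norm uniformly rotund, $\|\tfrac{x+x_n}{2}\| < 1$, a contradiction. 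In a general reflexive space one does not have uniform rotundity, so instead I would run the cardinality/covering argument directly: the real content is that a \emph{countable} family of such slices cannot cover $S_X$. Here is where I would bring in the Baire-category heart of the Lindenstrauss--Phelps argument. The set of singular points of $\mathcal S$ restricted to $S_X$ is closed; if every point of $S_X$ had a neighborhood meeting finitely many $S_n$, then $\mathcal S$ would be locally finite on $S_X$, and one derives a contradiction with Corson's theorem (Theorem~\ref{corson}) applied suitably, or directly: a point-finite countable cover of the (Baire) complete metric space $S_X$ by sets that are relatively closed-or-open forces, by Baire category, some $S_n$ to have nonempty interior in $S_X$, which is impossible for a proper slice since $\|f_n\|$ is finite and $S_X$ has no interior relative to any such slice. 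The precise mechanism is: pass to a minimal subcovering; in a minimal point-finite covering, for each $n$ there is $x \in S_n$ belonging to no other member, and by point-finiteness $x$ has a neighborhood in which $S_n$ is the \emph{only} member meeting it, so $S_n$ contains a relative neighborhood of $x$ in $S_X$ — i.e.\ $S_n \cap S_X$ has nonempty relative interior. Then since $S_n$ is a halfspace-type set and $\|f_n\| \leq M < \infty$, a relative interior point $x$ of $S_n \cap S_X$ has $f_n(x) > 1$ strictly, and nearby points $x' \in S_X$ with $\|x'\|=1$ and $f_n(x') > 1$ fill a relatively open piece of $S_X$; reflexivity lets us extract from these an extreme-point-type contradiction, or more simply: a relatively open subset of $S_X$ on which $f_n > 1$ forces, via the supporting functional, a segment in $S_X$ only if the space is non-rotund — in general, I would instead note that infinitely many distinct $f_n$ with $\|f_n\| \leq M$ would, by weak-* compactness of $M B_{X^*}$ (reflexivity), have a weak-* (= weak) cluster point $f$, and the corresponding slices would overlap near a maximizer of $f$, contradicting point-finiteness.

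I would organize the final argument as: (1) boundedness of $\{f_n\}$ via Proposition~\ref{uniformboundedness}; (2) using reflexivity, the weak compactness of $\{f_n/\|f_n\|\}$ yields a subsequence converging weakly to some $f \in B_{X^*}$, and a careful estimate shows the slices along this subsequence all contain points arbitrarily close to a common norm-attaining point of $f$, so in fact a single point of $S_X$ lies in infinitely many $S_n$ — the contradiction. The subtle point is making (2) rigorous: one needs that $f_n \to f$ weakly with $\|f_n\| \to \lambda \geq 1$ implies the slices $\{f_n \geq 1\}$ eventually all contain a fixed small ball around the (or a) point $x_0$ with $f(x_0) = \|f\| = \lim \|f_n\|$; this uses that $f_n(x_0) \to f(x_0) = \lambda > 1 - \delta$ for the \emph{fixed} $x_0$, hence $f_n(x_0) \geq 1$ for large $n$, and then continuity gives a neighborhood. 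If $\|f\| = 0$ this fails, so one must rule that out, which is exactly where the lower bound $\|f_n\| \geq 1$ and a quantitative rotundity-free argument — or the separable reduction to a Hilbert-like setting — is needed.

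\textbf{Main obstacle.} The crux is that weak convergence $f_n \to f$ does \emph{not} by itself force $\|f\| \geq 1$; the norm is only weakly lower semicontinuous, so $\|f\|$ could be small or zero even though each $\|f_n\| \geq 1$. Overcoming this — showing that the covering hypothesis $S_X \subset \bigcup \mathcal S$ together with point-finiteness precludes the functionals from ``spreading out'' and losing mass in the weak limit — is the heart of the matter, and is presumably handled by a more delicate version of the Lindenstrauss--Phelps extraction (choosing the cluster functional and the base point simultaneously and using minimality of the subcovering), rather than a naive subsequence limit. I expect the author's proof to sidestep this by a clever direct construction of a point in infinitely many slices using the covering property on a well-chosen $2$- or $3$-dimensional slice, rather than an abstract weak-compactness argument.
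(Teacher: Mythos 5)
Your first step (boundedness of $\{f_n\}$ via Proposition~\ref{uniformboundedness}) is exactly right and matches the paper, but the proof is not completed, and the obstacle you yourself flag at the end is fatal to the route you propose. After extracting a weakly convergent subsequence $f_{n_k}\rightharpoonup f$ (Eberlein--\v{S}mulian), the claim that the slices along this subsequence all contain a fixed point of $S_X$ only works if $\|f\|>1$ strictly, since you need $f(x_0)>1$ at a norm-attaining point $x_0$ of $f$ together with the uniform bound $\|f_{n_k}\|\leq M$ to get a common neighbourhood; weak lower semicontinuity of the norm gives no lower bound on $\|f\|$, and in the typical situation (e.g.\ $f_n\rightharpoonup 0$, as for coordinate functionals in $\ell_2$) the cluster functional carries no information. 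This is precisely the hard case, so the weak-compactness extraction cannot close the argument. The auxiliary Baire-category detour on $S_X$ rests on a false claim as well: a set $\{x\in S_X:\,f_n(x)>1\}$ is relatively open in $S_X$ and in general nonempty, so ``some $S_n\cap S_X$ has nonempty relative interior'' is not a contradiction, and minimality of a subcovering buys you nothing there.

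The paper's actual mechanism is different. With $\{f_n\}$ bounded, it sets $U=\bigcap_n\{x:\,f_n(x)<1\}$, an open convex neighbourhood of $0$ contained in $U_X$ (openness uses point-finiteness: if $\sup_n f_n(x)=1$ for some $x$ with $\|x\|<1$, then $x/\|x\|$ lies in infinitely many $S_n$). Writing $p_U$ for the Minkowski gauge, the weakly compact set $\overline U$ is the countable union of the weakly closed convex sets $F_n=\{x\in\overline U:\,f_n(x)=p_U(x)\}$, so by Baire category some $F_1$ has nonempty weak interior in $\overline U$ at a point $x_0\in U$; sliding from $x_0$ inside $\ker f_1$ (possible because $X$ is infinite-dimensional, so the weak neighbourhood meets $\ker f_1$ in an unbounded set) until one reaches $\partial U$ produces $y_0\in F_1$ with $1=p_U(y_0)=f_1(y_0)=f_1(x_0)=p_U(x_0)$, contradicting $x_0\in U$. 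The contradiction is thus extracted from the gauge of the intersection of the complementary half-spaces, not from a weak limit of the $f_n$; this is the step your proposal is missing.
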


\begin{proof}
Suppose on the contrary that, for every $x\in S_X$, the set
$$N_x:=\{n\in\N;\, x\in S_n\}$$ is finite. By Proposition~\ref{uniformboundedness}, we can assume that $\{f_n\}$ is bounded in $X^*$.
  For every $n\in\N$, let $U_n=f_n^{-1}\bigl{((-\infty,1)\bigr)}$ and put $U=\bigcap_n U_n$.
  Then $U$ is a convex set and $0\in\inte U$ (since $\{f_n\}$ is bounded in $X^*$). Moreover, $S_X\cap U=\emptyset$ and hence $U\subset U_X$. 
  
  We claim that $U$ is open.
   To see this, let $x\in U\setminus\{0\}$ and suppose on the contrary that $\sup_n f_n(x)=1$. Then, since $f_n(x)<1$ for each $n\in\N$ and
   since $\|x\|<1$, $N_{x/\|x\|}$ is an infinite set. This contradiction proves our claim.

Now, for every $n\in\N$, put $F_n=\{x\in\overline U;\,
f_n(x)=p_U(x)\}$ (where $p_U$ denotes the Minkowski gauge of the
set $U$) and observe that $F_n$ is closed convex and hence
$w$-closed.

 Fix $x\in \partial U$ and observe that, for each $n\in\N$, $f_n(x)\leq1$; since $x\not\in U$,
 there exists $\overline n\in\N$ such that $f_{\overline n}(x)=1$. Hence $\overline U=\bigcup F_n$.
 Since $\overline U$ is $w$-compact, by the Baire category theorem, we can suppose without any loss of generality that $F_1$ has nonempty interior in $(\overline U,w)$.
 So, there exist $x_0\in F_1\cap U$ and $W$, a neighbourhood of the origin in the $w$-topology, such that
$(x_0+W)\cap\overline U\subset F_1$. Since $X$ is
infinite-dimensional, there exists $y_0\in[x_0+(W\cap\ker
f_1)]\cap\partial U\subset F_1$. Then
$$1=p_U(y_0)=f_1(y_0)=f_1(x_0)=p_U(x_0).$$
A contradiction, since $x_0\in U$.
\end{proof}

\noindent The following observation is an easy consequence of the fact that  the intersection among two distinct spheres in any Hilbert space lies in some
hyperplane (see \cite[(15.17)]{Ami}).

\begin{observation}\label{obs:slice} Let $X$ be a Hilbert space and let $B$ be a closed (open, respectively) ball intersecting the unit sphere $S_X$. Then there exists a
	closed (open, respectively) slice $S$ of $B_X$ such that $S_X\cap B$ coincide with
	$S_X\cap S$.
\end{observation}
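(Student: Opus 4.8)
The plan is to convert the condition of lying on a given ball into a single linear inequality, using the Riesz representation of $X^*$ together with the expansion of the Hilbert norm. Write $B=B(x_0,r)$ with $r\ge 0$ in the closed case and $B=U(x_0,r)$ with $r>0$ in the open case. First I would dispose of the trivial configuration $x_0=0$: then $B\cap S_X$ is either empty or all of $S_X$, so the hypothesis forces $S_X\subset B$, and one may take $S=B_X$. So assume from now on that $x_0\neq 0$, and let $f\in X^*\setminus\{0\}$ be the functional $f(x)=\langle x,x_0\rangle$; after the computation below one normalizes $f$ and the level by $\|x_0\|$ so that $\|f\|=1$.

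The key (and essentially only) computation: for every $x\in S_X$ we have
\[
\|x-x_0\|^2=\|x\|^2-2\langle x,x_0\rangle+\|x_0\|^2=1-2f(x)+\|x_0\|^2 .
\]
Hence, setting $\alpha:=\tfrac12\bigl(1+\|x_0\|^2-r^2\bigr)$, a point $x\in S_X$ satisfies $x\in B(x_0,r)$ if and only if $f(x)\ge\alpha$, and $x\in U(x_0,r)$ if and only if $f(x)>\alpha$. Therefore, defining $S:=\{x\in B_X;\ f(x)\ge\alpha\}$ in the closed case and $S:=\{x\in B_X;\ f(x)>\alpha\}$ in the open case, we immediately obtain
\[
S_X\cap B=\{x\in S_X;\ f(x)\ge\alpha\ (\text{resp. }>\alpha)\}=S_X\cap S,
\]
since on $S_X$ the two descriptions impose exactly the same constraint.

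It remains to check that $S$ is a genuine slice of $B_X$, i.e.\ that $\alpha$ lies in the admissible range (for the closed case $\alpha\le\sup_{B_X}f=\|x_0\|$, which makes $S$ a — possibly singleton — closed slice; for the open case $\alpha<\|x_0\|$, so that $S$ is nonempty). This is exactly where the assumption $B\cap S_X\neq\emptyset$ is used: pick $x\in B\cap S_X$; the identity above gives $f(x)\ge\alpha$ (resp.\ $>\alpha$), while by the Cauchy--Schwarz inequality $f(x)=\langle x,x_0\rangle\le\|x\|\,\|x_0\|=\|x_0\|$, so $\alpha\le\|x_0\|$ (resp.\ $\alpha<\|x_0\|$). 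I do not expect any real obstacle here; the only point requiring a little attention is the bookkeeping of the degenerate configurations (center at the origin, a degenerate ball $r=0$, or $B$ touching $S_X$ in a single point), all of which are handled by the non-emptiness hypothesis and the normalization of $f$.
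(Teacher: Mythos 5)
Your proof is correct and is essentially the paper's argument made explicit: the paper merely cites the fact that the intersection of two spheres in a Hilbert space lies in a hyperplane, and your polarization computation $\|x-x_0\|^2=1-2\langle x,x_0\rangle+\|x_0\|^2$ for $x\in S_X$ produces exactly that hyperplane $\{x:\langle x,x_0\rangle=\alpha\}$ and the corresponding slice. The bookkeeping of the degenerate cases ($x_0=0$, $r=0$, tangency) and the nonemptiness check via Cauchy--Schwarz are all handled correctly.
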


\noindent We are now ready to prove the main result of this section.

\begin{theorem}\label{teo:separableHilbert} 
	The following assertions hold true. 
	\begin{enumerate}
		\item Let $\B$ be a covering of a separable infinite-dimensional Hilbert
		space by closed or open balls. Suppose that $\B$ is
		point-finite, then $|\B|= 2^{\aleph_0}$.
		\item If we suppose that $\Gamma$ is an infinite set such that $|\Gamma|<2^{\aleph_0}$,
		$\ell_2(\Gamma)$ does not admit a point-finite covering by open or closed
		balls, each of positive radius.
		\item Let $\B$ be a covering of an  infinite-dimensional Hilbert
		space by open balls. Then $\B$ is
		not point-finite.
	\end{enumerate}
\end{theorem}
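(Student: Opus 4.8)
The plan is to reduce each of the three assertions to Theorem~\ref{teo:FollowingLindenstraussPhelps} via Observation~\ref{obs:slice}. Suppose $\B$ is a point-finite covering of an infinite-dimensional Hilbert space $X$ by closed or open balls. After passing to a minimal subcovering (Zorn), we may assume every $B\in\B$ is essential, so $B\cap S_X\neq\emptyset$ for every $B$; indeed if $B\cap S_X=\emptyset$ for some $B$, then $B$ is properly contained either in $U_X$ or in $X\setminus B_X$-side, and removing it still covers $S_X$ — more carefully, since $S_X\subset\bigcup\B$ must hold after minimalization (any ball disjoint from $S_X$ lies in one of the two open components of $X\setminus S_X$ and is covered by the rest). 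For each such $B\in\B$, Observation~\ref{obs:slice} produces a slice (closed or open according to the type of $B$) $S_B$ of $B_X$ with $S_X\cap B=S_X\cap S_B$; writing $S_B=\{x:\ f_B(x)\ge 1\}$ or $\{x:\ f_B(x)>1\}$ for a suitable $f_B\in X^*$, we have $f_B\notin U_{X^*}$ (since the slice is proper, meeting $S_X$ but not all of $B_X$), and $\{S_B\}_{B\in\B}$ still covers $S_X$ and is point-finite on $S_X$ (as $x\in S_B$ for $x\in S_X$ forces $x\in B$).

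For assertion (iii): if $\B$ consists of \emph{open} balls, then $X\setminus B_X$ is not covered trivially, but the essential open balls yield open slices $S_B$, and $\{S_B\}$ is a point-finite family of open slices covering $S_X$ with $f_B\notin U_{X^*}$. Now Theorem~\ref{teo:FollowingLindenstraussPhelps} would apply directly if $\B$ were countable; the issue is that $\B$ may be uncountable. Here the key extra input is Fact~\ref{density}: the open balls with nonempty interior — which is all of them — number at most $\mathrm{dens}(X)$, but that is not yet countable. So instead I would argue that the set of slices, being point-finite, has at most countably many that meet any fixed separable subspace densely... Actually the cleaner route for (iii): restrict attention to a separable infinite-dimensional subspace $Y\subset X$. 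The traces $\{B\cap Y\}$ form a point-finite covering of $Y$ by open balls of $Y$ (Hilbert), and only countably many of them are nonempty on any countable dense set by Fact~\ref{density} applied in $Y$ — so we reduce to the separable case, where the family of relevant slices is countable, contradicting Theorem~\ref{teo:FollowingLindenstraussPhelps}.

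For assertions (i) and (ii): in the separable case of (i), $\B$ is a point-finite covering by balls, the induced slices $\{S_B\}$ cover $S_X$, and by Fact~\ref{density} at most $\mathrm{dens}(X)=\aleph_0$ of these have nonempty interior. If $|\B|<2^{\aleph_0}$ were possible we would in particular get $|\B|$ at most $\aleph_0$ in many cases; but the real point is Theorem~\ref{teo:FollowingLindenstraussPhelps} forbids a \emph{countable} such family from being point-finite while covering $S_X$. So I would show: if $\B$ is point-finite and covers $X$, one can extract a countable subfamily still covering $S_X$ — this uses separability and Fact~\ref{density}, since the balls with nonempty interior (all of them) number $\le\mathrm{dens}(X)=\aleph_0$ — wait, that already gives $|\B|\le\aleph_0$, contradicting nothing yet. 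The resolution: Fact~\ref{density} shows a point-finite family of sets with nonempty interior has size $\le\mathrm{dens}$; combined with Theorem~\ref{teo:FollowingLindenstraussPhelps} which kills the countable case, we conclude $\B$ cannot be point-finite if separable — hence (i) must be read as: point-finiteness forces $|\B|\ge 2^{\aleph_0}$, i.e. more than $\mathrm{dens}(X)$, which directly contradicts Fact~\ref{density} unless... Let me restate the clean deduction: Fact~\ref{density} gives $|\B|\le\mathrm{dens}(X)$; in the separable case $\mathrm{dens}(X)=\aleph_0$ so $\B$ is countable, and then Theorem~\ref{teo:FollowingLindenstraussPhelps} (via the slices $S_B$) yields a contradiction — proving the separable Hilbert space admits \emph{no} point-finite covering by closed or open balls, which is stronger than (i) as stated; then (i) follows a fortiori, and (ii) follows because $\mathrm{dens}(\ell_2(\Gamma))=|\Gamma|<2^{\aleph_0}$ again makes $\B$ have size $<2^{\aleph_0}$, contradicting... no: for (ii) we need that $|\Gamma|<2^{\aleph_0}$ plus Fact~\ref{density} bounds $|\B|\le|\Gamma|$, and then one separates a separable subspace to invoke the separable result.

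The main obstacle will be the non-separable reduction in (ii): handing back from $\ell_2(\Gamma)$ to a separable subspace while keeping track that the induced covering of the subspace is still by genuine balls of positive radius and still point-finite, and ensuring the slice functionals stay outside $U_{X^*}$. I expect one picks a countable $\Gamma_0\subset\Gamma$ generating a separable $Y=\ell_2(\Gamma_0)$; the families $\{B\cap Y: B\in\B\}$ need not be balls of $Y$ in general (a ball of $\ell_2(\Gamma)$ centered off $Y$ does not intersect $Y$ in a ball of $Y$), so one must instead note that by Fact~\ref{density} only $\le|\Gamma|$ balls are nondegenerate and then use a cardinality/Baire-type argument on $S_X$ directly, exactly mirroring the proof of Theorem~\ref{teo:FollowingLindenstraussPhelps} but carried out at the level of the uncountable family of slices: the Baire category step there used $w$-compactness of $\overline U$ and needs the family to be countable, so the genuine work is replacing "Baire category over a countable cover" with "cardinality $<2^{\aleph_0}$ is not enough to cover," which is where the hypothesis $\mathrm{dens}(X)<2^{\aleph_0}$ is actually consumed — presumably by showing $S_X$ (having cardinality $2^{\aleph_0}$) cannot be covered by fewer than $2^{\aleph_0}$ proper slices each omitting a "large" piece, via a transfinite or measure-theoretic selection. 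I would look to the original Lindenstrauss–Phelps argument and Corson's theorem for the precise packaging.
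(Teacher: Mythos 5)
Your overall strategy (slices via Observation~\ref{obs:slice} feeding into Theorem~\ref{teo:FollowingLindenstraussPhelps}, with Fact~\ref{density} and a separable reduction) is the right one, and your sketch for (iii) is essentially sound. But there are two genuine gaps. First, you repeatedly treat every member of $\B$ as having nonempty interior and conclude from Fact~\ref{density} that $|\B|\le\mathrm{dens}(X)$; this fails in assertion (i), where the paper's conventions explicitly allow \emph{degenerate} closed balls (singletons). Indeed your claimed strengthening --- that a separable Hilbert space admits \emph{no} point-finite covering by closed or open balls --- is false: $\{\{x\}:\,x\in X\}$ is a point-finite covering by degenerate balls of cardinality $2^{\aleph_0}$, which is exactly why (i) is phrased as $|\B|=2^{\aleph_0}$. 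The hypothesis $|\B|<2^{\aleph_0}$ is consumed precisely to neutralize the degenerate balls: the paper discards the finitely many balls containing $0$, observes that the set $A$ of radii $r$ for which $rS_X$ contains a degenerate ball has cardinality at most $|\B|<2^{\aleph_0}$, picks $\rho\notin A$, and only then are all balls meeting $\rho S_X$ non-degenerate, so Fact~\ref{density} makes that subfamily countable and Theorem~\ref{teo:FollowingLindenstraussPhelps} (applied to $\rho S_X$) gives the contradiction. Your proposal has no mechanism for this step: minimalization does not guarantee that every surviving ball meets $S_X$ (a minimal covering may well contain balls, even singletons, disjoint from $S_X$), and the transfinite or measure-theoretic replacement of the Baire category argument that you gesture at in the final paragraph is neither needed nor what actually happens.

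Second, your worry that ``a ball of $\ell_2(\Gamma)$ centered off $Y$ does not intersect $Y$ in a ball of $Y$'' is backwards: in a Hilbert space the intersection of any ball with a closed subspace \emph{is} a ball of the subspace (possibly empty or degenerate), by orthogonal projection. This is exactly what makes the reductions (ii)$\Rightarrow$(i) and (iii)$\Rightarrow$(i) work --- and it is also why degenerate balls must be admitted in (i): a positive-radius ball tangent to $Y$ traces out a singleton of $Y$. Once this is in place, (ii) follows by intersecting $\B$ with $Y=\ell_2\subset\ell_2(\Gamma)$, noting $|\B|\le\mathrm{dens}(\ell_2(\Gamma))=|\Gamma|<2^{\aleph_0}$ by Fact~\ref{density} (legitimate here since all balls of $\B$ have positive radius), and invoking (i) for the trace family; (iii) follows because open balls trace out open balls of $Y$, so the trace family is countable by Fact~\ref{density} and (i) forbids it from being point-finite.
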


\begin{proof}
Let us observe that (ii) follows easily by (i), indeed assume on the contrary that $\B$ is a point-finite cover of $\ell_2(\Gamma)$ by open or closed balls, each of positive radius. Since the
density character of $\ell_2(\Gamma)$ is $|\Gamma|$,  by Fact~\ref{density}, we have $|\B|<2^{\aleph_0}$.
Let us consider $Y=\ell_2\subset\ell_2(\Gamma)$ and  observe
that
$$\B':=\{B\cap Y; B\in\B,\, B\cap Y\neq\emptyset\}$$
is a cover of a separable infinite-dimensional Hilbert space by
open or closed balls such that $|\B'|<2^{\aleph_0}$.
By (i), we get a contradiction.

Similarly, (i) implies (iii). Indeed, if $\B$ is a cover of an infinite-dimensional Hilbert space $X$  by open  balls and we  consider $Y=\ell_2\subset X$, we have
that
$$\B':=\{B\cap Y; B\in\B,\, B\cap Y\neq\emptyset\}$$
is a cover of a separable infinite-dimensional Hilbert space by
open balls. By Fact~\ref{density}, $\B'$ is countable. By (i), $\B'$ (and hence $\B$) is not point-finite. 

It remains to prove (i). Let $\B$ be a point-finite cover of a separable
infinite-dimensional Hilbert space $X$ by open or closed balls.  Since $|X|=2^{\aleph_0}$ and $\B$ is point-finite, we clearly have $|\B|\leq 2^{\aleph_0}$. Now, suppose on the contrary that   $|\B|< 2^{\aleph_0}$. Since the origin of $X$ is
contained in finitely many members of $\B$, if we denote
$$\B':=\{B\in \B;\, 0\not\in B\},$$
 there
exists $R_0>0$ such that, for each $r\geq R_0$, $r S_X$ is
contained in $\bigcup \B'$.

Let us consider the set $A\subset[R_0,\infty)$ defined by
$$A:=\{r\geq R_0;\, \exists B\in\B'\hbox{ such that }B\subset r S_X\};$$
that is, $A$ is the set of all $r\geq R_0$ such that $r S_X$
contains a degenerate ball $B\in\B'$. It is clear that
$|A|\leq|\B'|=|\B|< 2^{\aleph_0}$ and hence there exists $\rho\in [R_0,\infty)\setminus A$. By the
separability of the space, it is clear that the family
$$\B'':=\{B\in \B';\,  B\cap \rho S_X\neq\emptyset\},$$
is countable (indeed, each element in $\B''$ has nonempty
interior). Moreover, $\rho S_X$ is contained in
$\bigcup \B''$. By Observation~\ref{obs:slice}, there
exists a countable point-finite family $\mathcal S$ of closed or open
slices of $\rho B_X$ which covers
$\rho S_X$ and such that $0\not\in\overline S$, whenever $S\in\mathcal S$. By
Theorem~\ref{teo:FollowingLindenstraussPhelps}, we get a contradiction. 
\end{proof}

\section{Point-finite coverings by balls of Banach spaces that are both uniformly rotund and uniformly smooth}\label{section:appendix}

The aim of this section is to show that, following the argument introduced in \cite{FonfLevZan14}, it is possible to extend Theorem~\ref{teo:separableHilbert} to Banach spaces that are both uniformly rotund and uniformly smooth.
 The next two results coincide with \cite[Proposition~2.3]{FonfLevZan14} and {\cite[Fact~2.4]{FonfLevZan14}}, respectively. Observe that, if we use 
  Proposition~\ref{uniformboundedness} instead of \cite[Proposition~2.1]{FZHilbert},
  both the proofs presented in \cite{FonfLevZan14} work also in the non-separable case and even if we consider families of open or closed balls. 

\begin{proposition}
	\label{prop:locally-point-finite}
	
	Let ${\cal B}=\{B_n\}_{n\in\N}$ be a countable family of open or closed balls in a uniformly smooth
	Banach space $X$. Let us denote by $R_n$ the radius of $B_n$ ($n\in\N$) and suppose that $R_n\to \infty$.
	If $\cal B$ is not locally finite, then it is not point-finite.
\end{proposition}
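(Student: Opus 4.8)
The plan is to locate a singular point $p$ of $\mathcal B$ and, exploiting uniform smoothness to ``linearise'' the large balls near $p$, to exhibit a single point lying in infinitely many members of $\mathcal B$. First, since $\mathcal B$ is not locally finite it has a singular point $p$, so every neighbourhood of $p$ meets infinitely many members; I would pick indices $n_1<n_2<\cdots$ and points $y_k\in B_{n_k}$ with $y_k\to p$, and write $c_k$, $R_k:=R_{n_k}\to\infty$ for the centre and radius of $B_{n_k}$ and $a_k:=\|p-c_k\|$. Two harmless reductions come first: if some subsequence of $(a_k)$ is bounded, then along it $B_{n_k}$ eventually contains $B(p,1)$ (a ball with bounded centre and radius going to infinity), so $p$ lies in infinitely many members and we are done — hence assume $a_k\to\infty$; similarly, if $p\in B_{n_k}$ for infinitely many $k$ we are done, so assume $p\notin B_{n_k}$ for all $k$, whence $b_k:=R_k-a_k\le 0$.

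Next I would linearise. Since $X$ is uniformly smooth the norm is uniformly Fr\'echet differentiable on $S_X$; setting $w_k:=(p-c_k)/a_k\in S_X$ and letting $g_k\in S_{X^*}$ be the support functional at $w_k$, this gives, for each $M$ and uniformly for $\|v\|\le M$, the identity $\|p-c_k+v\|=a_k+g_k(v)+\eta_k(v)$ with $\sup_{\|v\|\le M}|\eta_k(v)|\to 0$ as $k\to\infty$ (because $\|v\|/a_k\to 0$). Consequently, for a fixed $v$, the point $p+v$ lies in $B_{n_k}$ if and only if $g_k(v)\le b_k-\eta_k(v)$ (strict inequality for open balls). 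Feeding in $v=y_k-p\to 0$ yields $b_k\ge g_k(y_k-p)+\eta_k(y_k-p)\to 0$, so in fact $b_k\to 0$. The whole problem is thus reduced to producing one vector $v$ for which $g_k(v)<b_k-\eta_k(v)$ for infinitely many $k$: for such a $v$, the point $p+v$ belongs to infinitely many members of $\mathcal B$, so $\mathcal B$ is not point-finite.

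Uniform smoothness forces $X^*$ to be uniformly convex, hence reflexive, hence $X$ reflexive; so, passing to a subsequence, $g_k\rightharpoonup g$ weakly. If $g\neq 0$, pick $u$ with $g(u)>0$ and take $v=-u$: then $g_k(v)\to -g(u)<0$ while $b_k-\eta_k(v)\to 0$, so the required inequality holds for all large $k$ and we are done. The delicate case, which I expect to be the one real obstacle, is $g_k\rightharpoonup 0$: here the Lindenstrauss--Phelps-type ``single slicing direction'' device breaks down, since no fixed $u$ keeps $g_k(u)$ bounded away from $0$, and one is forced to build $v$ as a norm-convergent series blending infinitely many of the directions $-w_k$.

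For that case, the plan is: fix $t_j:=4^{-j}$ and choose a subsequence $(k_j)$ so that for every $j$ one has $|g_{k_j}(w_{k_i})|\le t_j$ for all $i<j$ (possible since $g_k\rightharpoonup 0$) and $|b_{k_j}|+\sup_{\|z\|\le 1/3}|\eta_{k_j}(z)|\le t_j/4$ (possible since $b_k\to 0$ and the smoothness error is uniformly small on bounded sets). Put $v:=-\sum_j t_j w_{k_j}$, which converges with $\|v\|\le 1/3$; writing $g_{k_j}(v)=-t_j-\sum_{i<j}t_i g_{k_j}(w_{k_i})-\sum_{i>j}t_i g_{k_j}(w_{k_i})$, the $i<j$ sum is at most $t_j\sum_{i<j}t_i\le t_j/3$ and the $i>j$ sum is at most $\sum_{i>j}t_i=t_j/3$, so $g_{k_j}(v)\le -t_j/3<-t_j/4\le b_{k_j}-\eta_{k_j}(v)$, giving $p+v\in B_{n_{k_j}}$ for every $j$. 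The point I would take care over is that this interlacing is legitimate — every estimate used bounds $\eta$ and the norm of the tail of the series only through $\|v\|\le 1/3$, so the conditions imposed at stage $j$ constrain $k_j$ alone and not the not-yet-chosen tail — and that the three ``smallness'' requirements (weak nullity of $(g_k)$, $b_k\to 0$, uniform smallness of the smoothness error) really can be met simultaneously against the geometrically decaying coefficients $t_j$.
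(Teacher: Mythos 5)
Your argument is correct, and it reproduces the first half of the proof the paper relies on (that of \cite[Proposition~2.3]{FonfLevZan14}): pick a singular point $p$, dispose of the cases where the centres stay bounded or $p$ itself lies in infinitely many balls, and use uniform Fr\'echet differentiability of the norm to rewrite membership of $p+v$ in $B_{n_k}$ as the almost-linear condition $g_k(v)\le b_k-\eta_k(v)$ with $b_k\to 0$ and $\eta_k$ uniformly small on bounded sets (your error estimate $|\eta_k(v)|\le 2a_k\,\rho_X(\|v\|/a_k)$ is exactly what uniform smoothness provides, and the reduction to $a_k\to\infty$ is what makes it uniform). Where you genuinely diverge is the endgame. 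The intended route rescales the support functionals by the small quantities $|b_k|+\sup_{\|z\|\le M}|\eta_k(z)|$, obtains an unbounded set in $X^*$, and invokes Proposition~\ref{uniformboundedness} (the uniform boundedness principle) to produce one point lying in infinitely many of the approximating slices, hence in infinitely many balls. You instead use reflexivity to extract a weak limit $g$ of $(g_k)$, dispatch the case $g\ne 0$ with a single direction, and handle $g=0$ by the gliding-hump series $v=-\sum_j t_j w_{k_j}$; the interlacing is sound, since each stage constrains only $k_j$ against the finitely many indices already chosen, the tail is controlled by $\|g_{k_j}\|=1$ alone, and $g_{k_j}(w_{k_j})=1$ supplies the dominant term $-t_j$ against the $t_j/3+t_j/3+t_j/4$ of errors. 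In effect your Case 2 is a hand-rolled gliding-hump proof of precisely the instance of Banach--Steinhaus that Proposition~\ref{uniformboundedness} packages, so the paper's version is shorter once that proposition is in place, while yours is self-contained and makes the geometry of where to relocate $p$ explicit; both are valid in the non-separable setting and for open as well as closed balls.
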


\begin{fact}\label{F2} Let ${\cal B}=\{B_n\}_{n\in\N}$ be a countable collection of open or closed balls in a uniformly rotund
	Banach space $X$. Let us denote by $R_n$ the radius of $B_n$ ($n\in\N$).
	Let  $b>0$ and $x_0\in X$. Suppose that, for each $n\in\N$, $R_n > b$ and $x_0 \notin {\rm int}B_n$. If
	$$F_n = \conv \bigl( B_n \setminus U(x_0,b) \bigr)$$
	and  ${\rm dist}(x_0,F_n) \to 0$
	then $R_n \to \infty$.
\end{fact}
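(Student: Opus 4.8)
The plan is to prove Fact~\ref{F2} by contraposition: assuming that the radii $R_n$ do \emph{not} tend to infinity, we extract a subsequence along which $R_n$ stays bounded, say $R_{n_k}\le M$ for all $k$, and show that $\dist(x_0,F_{n_k})$ cannot tend to $0$. Write $c_n$ for the center of $B_n$, so $b<R_n\le M$ (along the subsequence) and $x_0\notin\interior B_n$ means $\|x_0-c_n\|\ge R_n>b$. The key geometric observation is that $F_n=\conv(B_n\setminus U(x_0,b))$ is obtained from the ball $B_n$ by cutting off the open ball $U(x_0,b)$ and taking the closed convex hull; since $x_0$ is (weakly) exterior to $B_n$ and the excised ball is centered at $x_0$, the part of $B_n$ closest to $x_0$ is exactly the ``cap'' $B_n\cap U(x_0,b)$ that gets removed. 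So a point of $F_n$ near $x_0$ would force a point of the \emph{extreme} set $B_n\setminus U(x_0,b)$ to be near $x_0$, i.e.\ near the sphere $S(x_0,b)=\partial U(x_0,b)$, which would in turn pin down a near-supporting direction.

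Next I would invoke uniform rotundity to turn a small distance $\dist(x_0,F_n)=\delta_n\to 0$ into a quantitative statement about the geometry of $B_n$ near $x_0$. Concretely: take $z_n\in F_n$ with $\|z_n-x_0\|\le 2\delta_n$; writing $z_n$ as a convex combination (or limit of convex combinations) of points $u_n^i\in B_n\setminus U(x_0,b)$, uniform rotundity of $X$ (equivalently, of the balls $B_n$, which all have comparable radii $\in(b,M]$) forces these points to be ``clustered'': since their average is within $2\delta_n$ of $x_0$, and each $u_n^i$ satisfies $\|u_n^i-x_0\|\ge b$, the modulus of convexity gives $\|u_n^i-u_n^j\|\le\eta(\delta_n)$ with $\eta(\delta_n)\to0$. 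Hence every such $u_n^i$ lies within $O(\delta_n+\eta(\delta_n))$ of $x_0$, so in fact $\|u_n^i-x_0\|\le b+o(1)$; combined with $\|u_n^i-x_0\|\ge b$ we get $u_n^i$ essentially on the sphere $S(x_0,b)$ and all of them (and hence $z_n$) in a shrinking ball around $x_0$. The point is that $B_n$ then contains $z_n$ with $\|z_n-x_0\|\to0$ while $\|x_0-c_n\|\ge R_n>b$, and moreover $B_n$ contains the whole segment from $z_n$ to $c_n$; one shows this segment enters $U(x_0,b)$ deeply, so the portion of $B_n$ near $x_0$ that was supposedly removed actually cannot be, unless $R_n$ is large.

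The cleanest way to close the argument is to produce an explicit point of $B_n$ strictly inside $U(x_0,b)$ at controlled distance from $x_0$, using that $z_n\in B_n$, $c_n\in B_n$ (the center), and convexity: the point $w_n=(1-t)z_n+t c_n$ lies in $B_n$ for $t\in[0,1]$, and for small $t$ we have $\|w_n-x_0\|\le (1-t)\|z_n-x_0\|+t\|c_n-x_0\|$; but to exploit rotundity we instead estimate along the direction from $x_0$ into $B_n$. Since $B_n\setminus U(x_0,b)$ is the set whose convex hull is $F_n$, and $F_n$ touches $x_0$ in the limit, $B_n$ itself must come within $\delta_n$ of $x_0$, i.e.\ $B_n\cap U(x_0,b)\ne\emptyset$ with a point very close to $x_0$; then the radius estimate for a ball $B(c_n,R_n)$ containing a point within $\delta_n$ of $x_0$ and satisfying $\|x_0-c_n\|\ge b$ — together with the convexity/uniform-rotundity bound forcing $B_n\setminus U(x_0,b)$ to be trapped near $S(x_0,b)$ — yields, by a compactness/limiting argument on the bounded data $(c_n,R_n)\in B_X\times(b,M]$ (after rescaling so $x_0=0$ and passing to a further subsequence with $c_n\to c$, $R_n\to R\in[b,M]$, and $\|c\|\ge b$), a limiting closed ball $B(c,R)$ with $b<\|c\|$ such that $B(c,R)\setminus U(0,b)$ actually contains the origin — an impossibility, since the origin lies in the open ball $U(0,b)$. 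This contradiction shows $R_n\to\infty$.

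I expect the main obstacle to be the quantitative uniform-rotundity step: making precise that a point of $F_n$ close to $x_0$ forces \emph{all} the extreme representatives in $B_n\setminus U(x_0,b)$ (not just their average) to cluster near the sphere $S(x_0,b)$, uniformly in $n$. This requires care because $F_n$ is a closed convex hull, so $z_n$ is only a \emph{limit} of finite convex combinations; one must either first reduce to finite combinations with an $\ep/2$ argument, or apply uniform rotundity in the ``$N$-dimensional'' or averaged form (the modulus of convexity controls how far a set of norm-$\ge b$ vectors with small-norm average can spread). Once that clustering is in hand, the rest is the soft compactness argument on the bounded family of centers and radii, and the final contradiction is immediate because $0\in U(0,b)$ can never belong to $B(c,R)\setminus U(0,b)$.
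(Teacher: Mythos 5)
Your overall strategy --- contraposition, a bounded subsequence $R_{n}\le M$, approximating a point of $F_n$ near $x_0$ by a finite convex combination of points $u_n^i\in B_n\setminus U(x_0,b)$, and using uniform rotundity to force those points to cluster --- is the right one (the paper does not reprove this fact but refers to \cite[Fact~2.4]{FonfLevZan14}, whose proof runs along these lines). However, the step that carries the whole proof is misstated. You assert that the $u_n^i$ must cluster ``since their average is within $2\delta_n$ of $x_0$, and each $u_n^i$ satisfies $\|u_n^i-x_0\|\ge b$.'' That implication is false: in a Hilbert space, two antipodal points of the sphere of radius $b$ about $x_0$ have average exactly $x_0$, each lies at distance $b$ from $x_0$, and they are $2b$ apart. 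The clustering must come from the hypothesis $x_0\notin{\rm int}\,B_n$, which your argument never invokes at this step. Concretely: let $f_n\in S_{X^*}$ norm $x_0-c_n$; since $\|x_0-c_n\|\ge R_n$ one has $\sup_{B_n}f_n=f_n(c_n)+R_n\le f_n(x_0)$, so a finite convex combination of points of $B_n$ lying within $3\delta_n$ of $x_0$ must put all but a proportion $\sqrt{3\delta_n}$ of its weight on the slice $\{y\in B_n:\ f_n(y)>\sup_{B_n}f_n-\sqrt{3\delta_n}\}$, and uniform rotundity bounds the diameter of that slice by $\varepsilon R_n\le\varepsilon M$ as soon as $\sqrt{3\delta_n}\le b\,\delta_X(\varepsilon)$ (this is where $R_n>b$ enters). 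The condition $\|u_n^i-x_0\|\ge b$ plays no role in the clustering; it is what produces the contradiction afterwards.

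Once the clustering is established correctly the proof closes in one line: each $u_n^i$ carrying the dominant weight lies within $3\delta_n+O(\sqrt{\delta_n})+\varepsilon M<b$ of $x_0$ for $\varepsilon<b/(2M)$ and $n$ large, contradicting $u_n^i\notin U(x_0,b)$. Your proposed alternative ending, a ``compactness/limiting argument'' with $c_n\to c$ and a limiting ball $B(c,R)$, does not work: in an infinite-dimensional space the bounded sequence of centers has no norm-convergent subsequence (and a weak limit, available by reflexivity, only yields $\|c\|\le\liminf\|c_n\|$, the wrong inequality), and even granting a limit ball the conclusion you would reach is $0\in\conv\bigl(B(c,R)\setminus U(0,b)\bigr)$ --- which is precisely the statement one must refute via uniform rotundity, not the immediate absurdity ``$0\in B(c,R)\setminus U(0,b)$'' that you claim. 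You correctly flag the reduction from the closed convex hull to finite convex combinations; that part is routine (approximate within $\delta_n$ and absorb the error), but the misattributed rotundity step and the circular limiting argument are genuine gaps.
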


\noindent The next lemma coincides with \cite[Lemma~2.5]{FonfLevZan14}. Observe that in their statement it is not necessary to require that the members of $\cal F$ are closed.

\begin{lemma}
	\label{lemma1}
	Let $X$ be a reflexive Banach space. Let $x_0 \in X$, $a>b>c>0$ and
	$\cal F$ a  collection of convex subsets of $X$ contained in $B(x_0,a)\setminus U(x_0,c)$ such that 
	$\cal F$ covers $B(x_0,a)\setminus U(x_0,b)$. Then $\cal F$ is not locally finite in $X$.
\end{lemma}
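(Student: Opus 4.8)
The goal is to show that a collection $\mathcal F$ of convex sets squeezed inside the shell $B(x_0,a)\setminus U(x_0,c)$ and covering the larger shell $B(x_0,a)\setminus U(x_0,b)$ cannot be locally finite. The natural approach is to pass to a two-dimensional slice, reduce to a statement about covering an annular region in the plane by convex sets avoiding a disk, and then exploit a compactness/winding argument. After translating, assume $x_0=0$. Since $X$ is infinite-dimensional (reflexive and infinite-dimensional is the case of interest — and if $X$ is finite-dimensional the statement is Corson-type and can be handled directly, or one simply notes $\mathcal F$ covering the compact shell with each member avoiding $U(0,c)$ already forces infinitely many members near $S(0,c)$), fix a two-dimensional subspace $Z\subset X$ and look at $\mathcal F_Z:=\{F\cap Z:\ F\in\mathcal F,\ F\cap Z\neq\emptyset\}$. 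Each $F\cap Z$ is convex, contained in $(B(0,a)\setminus U(0,c))\cap Z$, and the family covers $(B(0,a)\setminus U(0,b))\cap Z$, which in $Z$ is a genuine closed annulus.

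\textbf{The key geometric point.} On the circle $S_b:=\{z\in Z:\ \|z\|=b\}$, which is entirely covered by $\mathcal F_Z$, I would argue that infinitely many distinct members are needed, which already gives a singular point on $S_b$ and hence in $X$. The reason is that a convex subset $C$ of the annulus $\{c\le\|z\|\le a\}$ that meets $S_b$ can only meet $S_b$ in an ``arc'' of angular length strictly less than $\pi$: if $C$ contained two points $z_1,z_2\in S_b$ that are antipodal or nearly so, the segment $[z_1,z_2]\subset C$ would pass through (or arbitrarily close to) the origin, violating $\|z\|\ge c$. Quantitatively, if $z_1,z_2\in C\cap S_b$ then the midpoint $\tfrac12(z_1+z_2)\in C$ has norm $\ge c$, which bounds the angle between $z_1$ and $z_2$ away from $\pi$ by a fixed amount depending only on $b/c$. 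Consequently each member of $\mathcal F_Z$ covers an arc of $S_b$ of angular length at most some $\theta_0<2\pi$ (in fact $<\pi$), and hence bounded away from the full circle; so the cover $\mathcal F_Z$ of the compact circle $S_b$ has no finite subcover? — that is too strong, finitely many short arcs can cover a circle. So instead I run the argument at a point: take any $z^*\in S_b$. I claim $z^*$ is a singular point for $\mathcal F$. If not, some neighborhood $V$ of $z^*$ in $X$ meets only finitely many $F\in\mathcal F$, say $F_1,\dots,F_k$. Then in any two-dimensional $Z$ containing $z^*$, the arc $V\cap S_b\cap Z$ is covered by $F_1\cap Z,\dots,F_k\cap Z$. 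Now push along $S_b\cap Z$ past the endpoints of this arc: by the arc-length bound each $F_i\cap Z$ covers a sub-arc of length $<\pi$, but crucially $z^*$ lies in at least one of them, and one shows that the finitely many $F_i$ cannot cover even a full half-neighborhood — actually the right move is a connectedness/continuation argument: start at $z^*$, walk around the circle $S_b\cap Z$; at each stage the point lies in one of finitely many globally available convex pieces (local finiteness would have to hold along the whole circle if it held at each point, by compactness), each covering an arc $<\pi$, so finitely many cannot cover the whole circle of total angle $2\pi$ — contradiction as soon as we need $>2$ such pieces with the overlap constraints, which a short arithmetic with $\theta_0<\pi$ forces.

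\textbf{Assembling the argument.} Concretely: suppose for contradiction $\mathcal F$ is locally finite in $X$. Fix a two-dimensional $Z$ and consider the circle $S_b\cap Z$. Local finiteness of $\mathcal F$ in $X$ implies local finiteness of $\mathcal F_Z$ along this compact circle, hence only finitely many members $F_1,\dots,F_m$ of $\mathcal F$ meet $S_b\cap Z$, and their traces cover $S_b\cap Z$. By the midpoint estimate above, for each $i$ the set $F_i\cap S_b\cap Z$ is contained in an arc of angular measure at most $\theta_0:=2\arccos(c/b)<\pi$. But $m$ arcs each of measure $<\pi$ covering a circle of measure $2\pi$ is possible only for $m\ge 3$; that is no contradiction by itself, so I sharpen: each $F_i\cap Z$ is convex, so $F_i\cap S_b\cap Z$ is actually a \emph{connected} sub-arc (the intersection of a convex set with a circle, where the convex set avoids the center, is connected). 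Thus $S_b\cap Z$ is covered by finitely many closed connected arcs each of measure $<\pi$, and at every point of $S_b\cap Z$ at least one of them is ``properly inside'' because the point is in the interior of the convex trace relative to the line — wait, that need not hold at arc endpoints. The clean finish, following \cite{FonfLevZan14}, is to note that a point $z^*\in S_b\cap Z$ lying on the boundary of \emph{every} $F_i$ that contains it cannot exist for more than finitely many $z^*$ on the arc unless some $F_i$ has the segment structure forbidden by reflexivity-driven exposure; more simply, pick $z^*$ to be an endpoint of one of the arcs $F_{i_0}\cap S_b\cap Z$; then $z^*$ is not in the relative interior of $F_{i_0}$'s trace, so on one side of $z^*$ the circle is covered by the \emph{remaining} $F_i$'s, $i\neq i_0$ — iterating and using that each removal strictly decreases $m$, we reach a stage where a nonempty arc of $S_b\cap Z$ must be covered by the empty family, the desired contradiction. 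Hence $\mathcal F$ is not locally finite. The main obstacle is making the arc-continuation step airtight — ensuring that ``at an endpoint of one trace the circle is locally covered by the others'' is valid, which requires the traces to be closed arcs and the covering to be of the compact circle; reflexivity enters precisely to guarantee (via weak compactness) that the relevant convex traces are closed and that the gauge/support-functional machinery of Theorem~\ref{teo:FollowingLindenstraussPhelps} can be invoked if one prefers the dual-slice route instead of the elementary planar one.
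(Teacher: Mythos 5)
The paper does not actually reprove this lemma (it is quoted verbatim from \cite[Lemma~2.5]{FonfLevZan14}), so your argument has to stand on its own, and it does not: the two-dimensional reduction is a dead end, because the planar version of the statement you are trying to contradict is \emph{true}. In any two-dimensional normed plane $Z$ the annulus $\{z:\ b\le\|z\|\le a\}$ \emph{can} be covered by finitely many convex sets contained in $\{z:\ c\le\|z\|\le a\}$. Indeed, for $f\in S_{Z^*}$ put $C_f=\{z:\ \|z\|\le a,\ f(z)\ge c\}$; each $C_f$ is convex and disjoint from $U(0,c)$, the open sets $\{z:\ f(z)>c\}$, $f\in S_{Z^*}$, cover the compact annulus $\{z:\ b\le\|z\|\le a\}$ (Hahn--Banach gives $f(z)=\|z\|\ge b>c$ for a suitable $f$), and compactness extracts a finite subfamily $C_{f_1},\dots,C_{f_m}$. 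So the traces $F\cap Z$ may well form a finite, hence locally finite, cover of the planar annulus, and no contradiction can be manufactured there. This global obstruction is mirrored in the local defects of your argument: three closed arcs of length $<\pi$ do cover the circle; the claim that a convex set avoiding the centre meets the circle $\{\|z\|=b\}$ in a connected arc is false (a thin convex strip at distance $\delta\in(c,b)$ from the origin meets it in two separated pieces); and the concluding ``remove one arc at a time'' induction never reaches an uncovered arc. The fact that the lemma fails outright in finite dimensions is precisely the signal that any proof must use infinite-dimensionality in an essential, non-sliceable way.

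The missing idea is the weak-compactness/Baire-category mechanism that the rest of the paper is built around. The intended route is: translate $x_0$ to $0$; separate each $F\in\mathcal F$ from the open convex set $U(0,c)$ by some $f_F\in S_{X^*}$ with $\inf_F f_F\ge c$, so that the sphere $\{x:\|x\|=b\}$, which $\mathcal F$ covers, is covered by the slices $\{x:\ (b/c)f_F(x/b)\ge 1\}$ of $B(0,b)$ with $\|(b/c)f_F\|=b/c>1$; local finiteness (after a separable-subspace or Lindel\"of reduction) makes the relevant subfamily countable and point-finite on that sphere; and then Theorem~\ref{teo:FollowingLindenstraussPhelps} --- whose proof uses that $B(0,b)$ is weakly compact by reflexivity and, crucially, that in infinite dimensions the sphere is weakly dense in the ball so that $\overline U=\bigcup_n F_n$ --- gives the contradiction. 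Your closing remark that reflexivity only serves to make traces closed in the plane misses this: reflexivity and infinite-dimensionality enter through the Lindenstrauss--Phelps argument on the whole space, not through any two-dimensional section.
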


\noindent The next lemma coincides with \cite[Lemma~2.6]{FonfLevZan14}. Observe that it holds also in the  case $X'$ is a closed infinite-dimensional subspace. Moreover, in their statement it is not necessary to require that the members of $\cal F$ are closed. Indeed, it is sufficient in its proof to use  Fact~\ref{F2} and Lemma~\ref{lemma1} instead of \cite[Fact~2.4]{FonfLevZan14} and \cite[Lemma~2.5]{FonfLevZan14}, respectively.

\begin{lemma}
	\label{lemma2}
	Let $X$ be both uniformly rotund and uniformly smooth. Consider a closed infinite-dimensional subspace $X' \subset X$ and let $x_0 \in X',\ a >0$. Assume that
	${\cal B}=\{B_n \}_{n=1}^\infty$ is a countable point-finite collection of open or closed
	balls and ${\cal F}=\{F_n \}_{n=1}^\infty$
	is a countable collection of  convex sets such that: 
	$\cal F$ covers $B(x_0,a)\cap X'$,
	$F_n \subset B_n\cap B(x_0,a)$ and
	$x_0 \notin {\rm int} B_n$, whenever $n\in\N$.
	Then there is a point $y \in B(x_0,a) \cap  X', \ y \neq x_0$, that is a singular point for $\cal F$.
\end{lemma}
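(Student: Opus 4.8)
The plan is to imitate the proof of \cite[Lemma~2.6]{FonfLevZan14}, working inside the infinite-dimensional subspace $X'$ and using Fact~\ref{F2} and Lemma~\ref{lemma1} in place of their separable counterparts. First I would argue by contradiction: suppose that every point $y\in B(x_0,a)\cap X'$ with $y\neq x_0$ is a regular point for $\cal F$. Since $\cal F$ covers the (norm-closed, convex, bounded) set $B(x_0,a)\cap X'$ and $X'$ is infinite-dimensional and reflexive, the family $\cal F$ cannot be locally finite at $x_0$ within $X'$ — this is where Lemma~\ref{lemma1} will be invoked after an appropriate shrinking of the ball. So $x_0$ must be a singular point, and there is a sequence $y_k\to x_0$, $y_k\in B(x_0,a)\cap X'$, with each $y_k$ lying in infinitely many members of $\cal F$; by a standard diagonalization, after relabeling the $F_n$ we may assume $\dist(x_0,F_n)\to 0$.

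Next I would set up the quantitative dichotomy on the radii $R_n$ of the balls $B_n$. Fix a small $b>0$. Since $x_0\notin\inte B_n$, the point $x_0$ is separated from $B_n$; the idea is to split $B_n$ into the part near $x_0$, namely $B_n\cap U(x_0,b)$, and the far part $B_n\setminus U(x_0,b)$, whose closed convex hull is $F_n:=\conv(B_n\setminus U(x_0,b))$ in the notation of Fact~\ref{F2}. Because $\dist(x_0,F_n)\to 0$ and each $R_n$ is bounded below (on the relevant subsequence, thanks to the point-finiteness of $\cal B$ at $x_0$ — only finitely many $B_n$ have very small radius, otherwise $x_0$ would be covered infinitely often), Fact~\ref{F2} forces $R_n\to\infty$ along this subsequence. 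Then I would localize: intersecting with $X'$ and restricting to a fixed ball $B(x_0,a')\cap X'$ with $c<b<a'<a$, the sets $F_n\cap B(x_0,a')\cap X'$ are convex, contained in $B(x_0,a')\setminus U(x_0,c)$, and cover $B(x_0,a')\setminus U(x_0,b)$ in $X'$ (using that $\cal F$ covers $B(x_0,a)\cap X'$ and the part of that annulus avoiding $U(x_0,b)$ is caught by the far parts $F_n$). Applying Lemma~\ref{lemma1} inside $X'$ yields a singular point $y\neq x_0$ for this family, contradicting the assumption that all such points are regular — and one checks that a singular point for the truncated family is singular for $\cal F$ itself.

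The main obstacle, and the point requiring the most care, is the bookkeeping that lets one pass from "$x_0$ is singular for $\cal F$" to "$\dist(x_0,F_n)\to 0$ after relabeling" while simultaneously ensuring the radii $R_n$ stay bounded below by some $b>0$ so that Fact~\ref{F2} applies. Concretely: the points $y_k$ witnessing singularity lie in many $F_m$, hence in the corresponding $B_m$; if infinitely many of those $B_m$ had radius $\le b$, then since they all contain points arbitrarily close to $x_0$ and $x_0\notin\inte B_m$, one deduces $x_0$ itself is a limit point covered infinitely often, violating point-finiteness of $\cal B$. This lets us discard the small-radius balls and keep a subsequence with $R_n>b$, on which the hypotheses of Fact~\ref{F2} are genuinely met. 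The remaining geometric verifications — that $F_n\subset B_n\cap B(x_0,a)$ is preserved under the truncation, that the annulus is covered, and that reflexivity of $X'$ (inherited from $X$, which is reflexive being uniformly rotund) is what Lemma~\ref{lemma1} needs — are routine. Uniform rotundity enters only through Fact~\ref{F2}, and uniform smoothness is not actually used in this lemma, exactly as noted for the original.
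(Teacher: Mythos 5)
Your outline assembles the right ingredients (Fact~\ref{F2}, Lemma~\ref{lemma1}, truncated convex hulls), but the logic connecting them is inverted at the decisive step, and two of your auxiliary claims are false. The deduction ``$\dist(x_0,F_n)\to0$ and the radii are bounded below, hence Fact~\ref{F2} forces $R_n\to\infty$'' conflates two different quantities: Fact~\ref{F2} needs $\dist\bigl(x_0,\conv(B_n\setminus U(x_0,b))\bigr)\to0$, which does not follow from $\dist(x_0,F_n)\to 0$, since $F_n$ may lie entirely in the cap $B_n\cap U(x_0,b)$, which in general is not contained in $\conv\bigl(B_n\setminus U(x_0,b)\bigr)$. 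Worse, suppose you did reach $R_n\to\infty$: that is precisely the regime in which the truncated hulls \emph{do} approach $x_0$, so your subsequent assertion that the truncated sets are ``contained in $B(x_0,a')\setminus U(x_0,c)$'' for a fixed $c>0$ is exactly what fails there, and Lemma~\ref{lemma1} cannot be applied. The correct architecture is a dichotomy: if some subsequence of truncated hulls approaches $x_0$, then Fact~\ref{F2} gives $R_n\to\infty$ while the balls accumulate at $x_0$, and Proposition~\ref{prop:locally-point-finite} --- i.e.\ uniform smoothness, which you explicitly claim is not used --- contradicts the point-finiteness of $\mathcal{B}$; otherwise the sets $\conv\bigl(F_n\setminus U(x_0,b)\bigr)$ uniformly avoid some $U(x_0,c)$ and Lemma~\ref{lemma1} applies. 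Note also that you must truncate the $F_n$, not the $B_n$: a singular point of $\{\conv(B_n\setminus U(x_0,b))\}$ need not be singular for $\mathcal{F}$, whereas $\conv\bigl(F_n\setminus U(x_0,b)\bigr)\subset F_n$ makes the transfer immediate.

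In addition, both of your ``regularity near $x_0$'' arguments fail. Lemma~\ref{lemma1} cannot show that $x_0$ is singular: its hypotheses force every member of the family outside $U(x_0,c)$, so the singular point it produces is automatically at distance at least $c$ from $x_0$; and the principle you implicitly invoke (a covering of a bounded closed convex body by bounded convex sets cannot be locally finite) is false as stated --- cover the ball by itself. The reason $x_0$ is singular is different and uses rotundity: if only finitely many $F_n$ met $U(x_0,\delta)$, then $U(x_0,\delta)\cap B(x_0,a)\cap X'$ would be covered by finitely many balls $B_n$, each rotund with $x_0\notin\inte B_n$, and intersecting the corresponding supporting hyperplanes at $x_0$ with the infinite-dimensional subspace $X'$ yields uncovered points arbitrarily close to $x_0$. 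Likewise, your claim that infinitely many small $B_m$ containing points arbitrarily close to $x_0$ would force $x_0$ to be covered infinitely often is false: point-finiteness concerns membership, not accumulation, and infinitely many tiny balls can accumulate at $x_0$ without any of them containing it. The small-radius case is instead excluded by the observation that a ball meeting both $U(x_0,\epsilon)$ and the complement of $U(x_0,b)$ has diameter at least $b-\epsilon$.
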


\begin{lemma}\label{lemma:interiorconvexhull}
Let $A$ be an open convex subset of an infinite-dimensional Banach space  $X$. Let $A_1,\ldots,A_n$ be nonempty convex  sets in $X$ such that, for each $i=1,\ldots,n$ and $x\in\partial A_i$, there exists a hyperplane $\Gamma$  supporting $\overline{A_i}$ at $x$ such that $\Gamma\cap \overline{A_i}=\{x\}$. Define $D=A \setminus(A_1 \cup \dots \cup A_n )$, then 
$D\subset\inte\bigl(\conv (D)\bigr)$.
\end{lemma}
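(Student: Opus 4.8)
The plan is to show that every point of $D$ lies in the interior of $\conv(D)$ by a local argument. Fix $x_0\in D$, so $x_0\in A$ and $x_0\notin A_i$ for every $i$. The idea is to produce, for each $i$, a vector $v_i$ pointing ``away'' from $A_i$ at $x_0$, so that a small segment $[x_0, x_0+tv_i]$ (and more generally a small truncated cone) stays outside $A_i$ while remaining in the open set $A$; then combining these $n$ directions via a convex-combination / Steinitz-type argument produces a neighbourhood-sized convex body inside $D$, hence a neighbourhood of $x_0$ inside $\conv(D)$. Concretely, since $x_0\notin A_i$ and $A_i$ is convex, there is a functional $f_i\in X^*$, $\|f_i\|=1$, with $f_i(x_0)\ge \sup_{A_i} f_i$; if $x_0\notin\overline{A_i}$ this is strict and any direction works locally, while if $x_0\in\partial\overline{A_i}$ (note $x_0\notin A_i$ but possibly $x_0\in\overline{A_i}$) we must use the hypothesis that $\overline{A_i}$ has a supporting hyperplane at $x_0$ meeting it only in $\{x_0\}$.

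The key step is the following: because $\overline{A_i}\cap\Gamma_i=\{x_0\}$ where $\Gamma_i=\ker(f_i-f_i(x_0))+x_0$ is the supporting hyperplane, the set $\overline{A_i}$ is ``pointed'' at $x_0$ in the direction $f_i$, so for any $\delta>0$ the truncated slab $\{x: f_i(x)\ge f_i(x_0)-\delta\}\cap\overline{A_i}$ shrinks toward $x_0$; using that $X$ is infinite-dimensional I can choose a direction $w_i\in\ker f_i$ (so $f_i(x_0+tw_i)=f_i(x_0)$, which already forces $x_0+tw_i\notin A_i$ for $t\ne 0$ unless $x_0+tw_i=x_0$). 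Then for a single fixed $w\in\bigcap_i\ker f_i$ — which exists and can be taken nonzero since $X$ is infinite-dimensional and the $f_i$ are finitely many — the points $x_0\pm tw$ lie outside every $A_i$ (using $\overline{A_i}\cap\Gamma_i=\{x_0\}$ to handle the boundary case: $x_0\pm tw\in\Gamma_i$ and $\ne x_0$, hence $\notin\overline{A_i}\supset A_i$), and lie in $A$ for small $t$; finally pick any small ball $B(x_0,\eta)\subset A$ and note every point of it, perturbed slightly in the $\pm w$ direction, can be written as a midpoint of two points of $D$, giving $B(x_0,\eta')\subset\conv(D)$ — more carefully, $\conv\{x_0+tw,\ x_0-tw,\ (\text{a few more such points})\}$ has nonempty interior in $X$? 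No: it is finite-dimensional.

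So the previous combination needs strengthening: rather than finitely many points, I would argue that $D$ itself, near $x_0$, contains $(B(x_0,\eta)\cap\Gamma)$ for a suitable closed hyperplane $\Gamma$ together with two points $x_0\pm tw$ off $\Gamma$; since $B(x_0,\eta)\cap\Gamma$ is relatively open in $\Gamma$ and $x_0\pm tw$ lie strictly on each side, the convex hull of this set is a neighbourhood of $x_0$ in $X$. The cleanest route: let $\Gamma_0$ be a closed hyperplane through $x_0$ transverse to $w$; for each $i$, since $x_0\notin A_i$ and (if $x_0\in\partial\overline{A_i}$) the supporting hyperplane meets $\overline{A_i}$ only at $x_0$, one shows $A_i\cap(x_0+\epsilon(B_X\cap\Gamma_0))$ is contained in an arbitrarily thin ``cap'' as $\epsilon\to0$ — precisely, $\mathrm{dist}(x_0, A_i\cap\{f_i\ge f_i(x_0)-\delta\})\to0$; intersecting finitely many such, I can choose $\eta>0$ and $t>0$ small with $(B(x_0,\eta)\cap\Gamma_0)\cup\{x_0\pm tw\}\subset D$. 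The main obstacle is exactly this uniform-pointedness estimate — converting the qualitative hypothesis ``$\Gamma_i\cap\overline{A_i}=\{x_0\}$'' into the quantitative statement that thin slabs of $\overline{A_i}$ near $x_0$ have small diameter — which for a general (possibly unbounded) convex set $A_i$ requires a compactness argument on the bounded piece $\overline{A_i}\cap B(x_0,1)$, using that $X$... is not assumed reflexive here, so I would instead argue directly: if no such shrinking held, a sequence $z_k\in\overline{A_i}$ with $f_i(z_k)\to f_i(x_0)$, $\|z_k-x_0\|\ge c>0$ would, after normalizing $u_k=(z_k-x_0)/\|z_k-x_0\|$ and passing to the segment $[x_0,z_k]$, yield points $x_0+ s u_k$ staying in $\overline{A_i}$ with $f_i$-value $\to f_i(x_0)$; extracting via weak compactness of bounded pieces (or simply noting $[x_0,z_k]\subset\overline{A_i}$ and taking $k\to\infty$ along the segment at a fixed short length $s\le c$) forces a nonzero vector in $\Gamma_i\cap\overline{A_i}$, contradiction. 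Once that is in hand the assembly of the neighbourhood is routine convex geometry, and I would close by remarking the case $n=1$ and $x_0\notin\overline{A_1}$ is immediate.
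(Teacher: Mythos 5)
Your proposal contains the right construction but then discards it in favour of a route that does not work. The point you reach in the middle --- a nonzero $w\in\bigcap_i\ker f_i$ with $x_0\pm tw\in\Gamma_i\setminus\{x_0\}$, hence outside every $\overline{A_i}$, and inside $A$ for small $t$ --- is exactly the paper's argument; the only missing step is to \emph{thicken} these two points. Since each $\overline{A_i}$ is closed and misses $x_0\pm tw$ (and the sets $A_k$ with $x_0\notin\overline{A_k}$ are already avoided by a whole ball around $x_0$), there is $\delta>0$ with $B(x_0+tw,\delta)\cup B(x_0-tw,\delta)\subset D$, and then $B(x_0,\delta)=\tfrac12 B(x_0+tw,\delta)+\tfrac12 B(x_0-tw,\delta)\subset\conv(D)$. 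Your objection that ``the convex hull of finitely many points is finite-dimensional'' does not apply, because one takes the convex hull of two \emph{balls} contained in $D$, not of finitely many points; your sentence ``every point of $B(x_0,\eta)$, perturbed in the $\pm w$ direction, is a midpoint of two points of $D$'' is precisely this observation, but it requires the inclusion $B(x_0\pm tw,\delta)\subset D$, which you never establish.

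The ``cleanest route'' you finally adopt has two genuine gaps. First, the inclusion $\bigl(B(x_0,\eta)\cap\Gamma_0\bigr)\subset D$ is in general false for every hyperplane $\Gamma_0$ through $x_0$ and every $\eta>0$: if $A_1$ is an open ball with $x_0$ on its boundary, any hyperplane other than the tangent one meets $A_1$ at points arbitrarily close to $x_0$, and for two or more distinct tangent hyperplanes no single hyperplane avoids all the $A_i$ near $x_0$; knowing that $A_i\cap B(x_0,\eta)\cap\Gamma_0$ is ``thin'' does not make it empty. Second, the uniform-pointedness estimate is simply false in infinite dimensions: in $\ell_2$ the closed convex set $C=\{x:\ x_1+\sum_{k\ge2}x_k^2/k\le 0\}$ is supported at $0$ by $\{x_1=0\}$ with $C\cap\{x_1=0\}=\{0\}$, yet the points $z_k=-k^{-1}e_1+e_k$ lie in $C$, have first coordinate tending to $0$, and satisfy $\|z_k\|\ge1$, so thin slabs do not shrink to $x_0$. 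Your proposed proof of the estimate cannot be repaired: the space is not assumed reflexive, and even where weak limits of $x_0+su_k$ exist they do not preserve distances, so the limit is just $x_0$ itself and produces no nonzero element of $\Gamma_i\cap\overline{A_i}$. The fix is to abandon any quantitative control of $A_i$ near $x_0$ and use only the closedness of $\overline{A_i}$ at the two off-centre points $x_0\pm tw$, as above.
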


\begin{proof}
Suppose that $x\in D$, let us prove that $x\in\inte\bigl(\conv (D)\bigr)$. If $x\in\inte D$ there is nothing to prove. Suppose that $x\in\partial D$, without any loss of generality, we can suppose that there exists $1\leq m\leq n$ such that:\begin{enumerate}
	\item $x\in\partial A_1\cap\ldots\cap\partial A_m$;
	\item  $x\not\in \overline A_{k}$, whenever $m<k\leq n$.
\end{enumerate}    For each $i=1,\ldots,m$, let   $\Gamma_i$ be a hyperplane supporting $\overline{A_i}$ at $x$ such that $\Gamma_i\cap \overline{A_i}=\{x\}$. Since $X$ is infinite-dimensional, $\Gamma=\Gamma_1\cap\ldots\cap\Gamma_m$ is an infinite-dimensional affine subset of $X$. Since $A$ is open, there exists $\epsilon>0$ such that $B(x,2\epsilon)\subset A\setminus(\bigcup_{m<k\leq n}A_k)$. Let $v_j\in \Gamma\cap B(x,\epsilon)$ ($j=1,2$) be such that $x\in(v_1,v_2)$ and let $0<\delta<\epsilon$ be such that $B(v_j,\delta)\cap \overline{A_i}=\emptyset$ ($j=1,2$, $i=1,\ldots,m$). Then clearly $B(x,\delta)\subset\conv (D)$ and the proof is concluded.   
\end{proof}

Using the previous lemma we obtain the following easy variation of 
\cite[Lemma~2.7]{FonfLevZan14}. For the sake of completeness, we include a proof.

\begin{lemma}
	\label{lemma3}
	Let $X$ be both uniformly rotund and uniformly smooth. Let ${\cal B}=\{ B_n \}_{n\in\N}$ be a  countable point-finite 
	family of open or closed balls in
	$X$.
	Let $Y\subset X$ be a separable infinite-dimensional closed subspace of $X$ and suppose that $B'_n=B_n\cap Y\neq\emptyset$ ($n\in\N$) and that $\{B'_n\}_{n\in\N}$ is a covering of $Y$.
		Put $B^\#_1 =B'_1$ and, for each $n\in\N$, define $$\textstyle B^\#_{n+1}=\begin{cases} \conv ((B'_{n+1} \setminus(B'_1 \cup \dots \cup B'_n )) &\ \text{if}\ B'_{n+1}\ \text{is a closed set in $Y$};\\
	\mathrm{int}_Y\bigl(\conv ((B'_{n+1} \setminus(B'_1 \cup \dots \cup B'_n ))\bigr) &\ \text{if}\ B'_{n+1}\ \text{is an open set in $Y$}.
	\end{cases}$$
	Then ${\cal B}^\#= \{ B_n^\# \}_{n=1}^\infty$ is a point-finite covering of $Y$. Moreover, for every $n\in\N$, we have that
	$B_n^\# \subset B'_n\subset B_n$ 
	and any $x_0 \in \bigcup_{n\in\N}\mathrm{int}_Y\, B'_n $  is a regular point for ${\cal B}^\#$.
\end{lemma}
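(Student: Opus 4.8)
The plan is to verify the five assertions in turn, relying on Lemma~\ref{lemma:interiorconvexhull} to handle the open/closed dichotomy uniformly. First I would record the basic inclusions $B^\#_n\subset B'_n\subset B_n$: for $n=1$ this is the definition $B^\#_1=B'_1$, and for $n+1$ it follows because $B'_{n+1}\setminus(B'_1\cup\dots\cup B'_n)\subset B'_{n+1}$, the convex hull of a subset of the convex set $B'_{n+1}$ stays inside $B'_{n+1}$, and (in the open case) taking $\mathrm{int}_Y$ of a subset of $B'_{n+1}$ keeps it inside the open set $B'_{n+1}$. From $B^\#_n\subset B_n$ and the fact that $\{B_n\}$ is point-finite in $X$ (hence $\{B'_n\}$ is point-finite in $Y$), we get at once that ${\cal B}^\#$ is point-finite.

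Next I would check that ${\cal B}^\#$ covers $Y$. Fix $y\in Y$; since $\{B'_n\}$ covers $Y$ there is a least index $n$ with $y\in B'_n$. If $n=1$ then $y\in B'_1=B^\#_1$. If $n>1$ then $y\in B'_n\setminus(B'_1\cup\dots\cup B'_{n-1})$, so certainly $y\in B^\#_n$ in the closed case (the set itself, before taking the convex hull, already contains $y$, and a set is contained in its convex hull). In the open case I want $y\in\mathrm{int}_Y\bigl(\conv(B'_n\setminus(B'_1\cup\dots\cup B'_{n-1}))\bigr)$, and here is where Lemma~\ref{lemma:interiorconvexhull} enters: I would apply it inside the infinite-dimensional Banach space $Y$ with $A=B'_n$ (open in $Y$, since $B_n$ is an open ball in $X$) and $A_i=B'_i$ for $i=1,\dots,n-1$; each $\overline{A_i}=\overline{B'_i}$ is a closed ball (or a point) in the uniformly rotund, hence strictly convex, space $Y$, so every boundary point is supported by a hyperplane meeting the ball only at that point. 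The lemma then yields $B'_n\setminus(B'_1\cup\dots\cup B'_{n-1})=D\subset\inte_Y(\conv D)$, which is exactly what is needed. Note that strict convexity of $Y$ is inherited from uniform rotundity of $X$ together with $Y$ being a subspace; this is the place where the uniform rotundity hypothesis is actually used.

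Finally I would establish that every $x_0\in\bigcup_{n\in\N}\mathrm{int}_Y B'_n$ is a regular point for ${\cal B}^\#$. Choose $n$ with $x_0\in\mathrm{int}_Y B'_n$ and pick $\varepsilon>0$ with the $Y$-ball around $x_0$ of radius $\varepsilon$ contained in $\mathrm{int}_Y B'_n$. I claim this neighbourhood meets only $B^\#_1,\dots,B^\#_n$ among the members of ${\cal B}^\#$: indeed for any $m>n$ the set $B^\#_m$ is contained in $\conv(B'_m\setminus(B'_1\cup\dots\cup B'_{m-1}))$, and since $B'_1\cup\dots\cup B'_{m-1}\supset B'_n$ and removing $B'_n$ from $B'_m$ removes all points of $\mathrm{int}_Y B'_n$, one checks that $\conv(B'_m\setminus B'_n)$ is disjoint from $\mathrm{int}_Y B'_n$ — this uses that $B'_n$ is a ball (convex with the property that its interior cannot be re-entered by convex combinations of points outside it), which again rests on convexity of balls. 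The main obstacle I anticipate is precisely this last disjointness verification: one must argue carefully that $\conv(B'_m\setminus B'_n)\cap\mathrm{int}_Y B'_n=\emptyset$, for which the cleanest route is to separate a point of $\mathrm{int}_Y B'_n$ from the convex set $\overline{B'_m}\setminus B'_n$ by a hyperplane, or to invoke directly that $B'_m\setminus B'_n$ lies on one side of a supporting hyperplane of $B'_n$ when $x_0$ is interior — in any case it is elementary but needs the ball structure, and the rest of the proof is bookkeeping.
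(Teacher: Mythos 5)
Your treatment of the inclusions $B^\#_n\subset B'_n\subset B_n$, of point-finiteness, and of the covering property via Lemma~\ref{lemma:interiorconvexhull} is correct and matches the paper's argument (including the observation that uniform rotundity gives, for each boundary point of $\overline{B'_i}^Y$, a supporting hyperplane meeting $\overline{B'_i}^Y$ only at that point). The problem is the last part. Your argument that every $x_0\in\mathrm{int}_Y B'_n$ is regular rests on the claim that for $m>n$ the set $\conv(B'_m\setminus B'_n)$ is disjoint from $\mathrm{int}_Y B'_n$, and this claim is simply false: take $B'_n=B(0,1)$ and $B'_m=B(0,10)$ in $\ell_2$; then $B'_m\setminus B'_n$ contains the sphere of radius $5$, so its closed convex hull contains $B(0,5)\supset B'_n$. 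No separating or supporting hyperplane of the kind you hope for exists, because $B'_m\setminus B'_n$ surrounds $B'_n$. More relevantly for the lemma, even when $x_0\notin B_m$, a large ball $B_m$ whose boundary passes at distance $\delta$ from $x_0$ produces a set $\conv\bigl(B_m\setminus B(x_0,b)\bigr)$ coming within distance $O(\delta)$ of $x_0$ (take two nearly antipodal points of $B_m$ just outside $B(x_0,b)$); so the sets $B^\#_m$ can in principle accumulate at $x_0$ without containing it, and mere convexity cannot rule this out.

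This is exactly where the hypotheses of uniform rotundity and uniform smoothness must be used quantitatively, and your proposal never invokes them for this step. The paper's proof argues by contradiction: if $x_0\in\mathrm{int}_Y B'_{\tilde n}$ were singular for ${\cal B}^\#$, one extracts a subsequence $\{n_i\}$ with $x_0\notin B_{n_i}$ and $B^\#_{n_j}$ meeting $B(x_0,1/i)$ for all $j\geq i$; since $B^\#_{n_i}\subset\conv(B_{n_i}\setminus B_{\tilde n})\subset\conv\bigl(B_{n_i}\setminus B(x_0,b)\bigr)$ for a suitable $b>0$ with $B(x_0,b)\subset B_{\tilde n}$, Fact~\ref{F2} (uniform rotundity) forces the radii $R_{n_i}\to\infty$, and then Proposition~\ref{prop:locally-point-finite} (uniform smoothness) shows that a countable family of balls with radii tending to infinity which is not locally finite cannot be point-finite, contradicting the point-finiteness of ${\cal B}$. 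You need to replace your final paragraph by an argument of this type; as written, the proof of the regularity assertion has a genuine gap.
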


\begin{proof}
 Observe that, since $X$ is uniformly rotund, for each $n\in\N$, one of the following conditions hold:
	\begin{enumerate}
		\item $\overline{B'_n}^Y$ is a rotund body  in $Y$;
		\item $B'_n$ is a singleton.
	\end{enumerate} 
In any case, for each $n\in\N$ and $x\in\partial_Y B'_n$, there exists a hyperplane $\Gamma$ in $Y$ supporting $\overline{B'_n}^Y$ at $x$ such that $\Gamma\cap \overline{B'_n}^Y=\{x\}$.
	Applying Lemma~\ref{lemma:interiorconvexhull}, we have that 
$B'_{n+1}\setminus (B'_n\cup\ldots\cup B'_1)\subset B_{n+1}^\#$, whenever $n\in\N$. Hence,
${\cal B}^\#$ is a covering of $Y$.
	
		For the latter part we  proceed as in the proof of \cite[Lemma~2.7]{FonfLevZan14}. For $n\in\N$, let us denote by $R_{n}$  the radius of the ball $B_{n}$. Assume on the contrary  that, for some $\tilde n\in\N$,  $x_0 \in \mathrm{int}_Y\, B'_{\tilde n}$
	is a singular point for ${\cal B}^\#$. Then there exists a subsequence of the integers 
	$\{n_i \}_{i\in\N}$ such that, for each $i\in\N$: (a) $n_i > \tilde n$; (b) $ x_0 \notin B'_{n_i}$;
		(c)  for every $j \geq i$, $B(x_0, 1/i)$ 
		intersects the set $B^\#_{n_j}$.

	Note that $B^\# _{n_i} \subset \conv(B'_{n_i} \setminus B'_{\tilde n})$, whenever $i\in\N$. Then
	$B(x_0, 1/i)$ intersects $ \conv(B_{n_j} \setminus B_{\tilde n})$, whenever $i\in\N$ and $j\geq i$. 
	Let $b > 0$ be such
	that $B(x_0,b) \subset B_{\tilde n}$, then it holds
	$$ \conv\bigl(B_{n_j} \setminus B(x_0,b) \bigr)\supset
	\conv(B_{n_j} \setminus B_{\tilde n}).$$ Since, for each $i\in\N$,  $x_0 \notin B_{n_i}$,  from Fact \ref{F2}, we get that 
	$ R_{n_i} \lo \infty $. By Proposition~\ref{prop:locally-point-finite}, this
	contradicts the assumption that $\cal B$ is point-finite.
\end{proof}

\begin{theorem}\label{teo:unifrotundsmooth} 
	Let $X$ be an infinite-dimensional Banach space. Suppose that $X$ is both uniformly rotund and uniformly smooth. Then the following assertions hold true. 
	\begin{enumerate}
		\item 
		If  $\mathrm{dens}(X)<2^{\aleph_0}$ then
		$X$ does not admit a point-finite covering by open or closed
		balls, each of positive radius.
		\item If $\B$ is a cover of $X$ by open balls then $\B$ is
		not point-finite.
	\end{enumerate}
\end{theorem}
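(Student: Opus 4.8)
The plan is to prove both parts by contradiction, reducing in each case to a countable point-finite covering of a separable infinite-dimensional closed subspace of $X$ and then feeding it to the machinery of \cite{FonfLevZan14} recorded above (Lemmas~\ref{lemma3} and~\ref{lemma2}) together with Corson's Theorem~\ref{corson}; recall that uniform rotundity forces reflexivity, so $X$ and all its closed subspaces are reflexive.

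I would start with (ii). Suppose $\B$ is a point-finite covering of $X$ by open balls and fix a separable infinite-dimensional closed subspace $Y\subset X$. The family $\B_Y:=\{B\cap Y;\ B\in\B,\ B\cap Y\neq\emptyset\}$ is a point-finite covering of $Y$ by nonempty relatively open sets, so it is countable by Fact~\ref{density} applied in $Y$. Picking one ball $B_E\in\B$ with $B_E\cap Y=E$ for each $E\in\B_Y$ gives a countable point-finite subfamily $\B_0=\{B_n\}_n\subset\B$ of open balls of $X$ whose traces $\{B_n\cap Y\}_n$ cover $Y$. Lemma~\ref{lemma3} then yields a point-finite covering $\B^\#=\{B_n^\#\}_n$ of $Y$ by bounded convex sets with $B_n^\#\subset B_n\cap Y$, such that every point of $\bigcup_n\mathrm{int}_Y(B_n\cap Y)$ is regular for $\B^\#$. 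As the $B_n$ are open, $\mathrm{int}_Y(B_n\cap Y)=B_n\cap Y$ and $\bigcup_n(B_n\cap Y)=Y$; hence $\B^\#$ is a \emph{locally finite} covering of the reflexive infinite-dimensional space $Y$ by bounded convex sets, contradicting Theorem~\ref{corson}.

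For (i), assume $\mathrm{dens}(X)<2^{\aleph_0}$ and that $\B$ is a point-finite covering of $X$ by balls of positive radius; by Fact~\ref{density}, $|\B|\le\mathrm{dens}(X)<2^{\aleph_0}$. If $X$ is separable, $\B$ is already countable and I would proceed with $Y=X$ as below. Otherwise I would fix a proper separable infinite-dimensional closed subspace $Y$ and a vector $v_0\in X\setminus Y$, and use a single translation to destroy all tangencies between the members of $\B$ and $Y$: for a ball of center $c$ and radius $r$, the map $t\mapsto\mathrm{dist}(c+tv_0,Y)$ is convex and coercive (because $v_0\notin Y$), hence takes the value $r$ for at most two values of $t$, so some $t^*$ avoids all of these (at most $2|\B|$) exceptional parameters. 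With $v=t^*v_0$, the family $\B+v$ is again a point-finite covering of $X$ by balls of positive radius, no member of which is tangent to $Y$; therefore every nonempty trace $(B+v)\cap Y$ has nonempty relative interior in $Y$, and Fact~\ref{density} makes $\{(B+v)\cap Y;\ (B+v)\cap Y\neq\emptyset\}$ a countable point-finite covering of $Y$. Lifting as before, I get a countable point-finite family $\B_0=\{B_n\}_n$ of balls of $X$ whose traces $B_n\cap Y$ have nonempty relative interior and cover $Y$.

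From here I would follow \cite{FonfLevZan14}. Lemma~\ref{lemma3} gives a point-finite covering $\B^\#=\{B_n^\#\}_n$ of $Y$ by bounded convex sets with $B_n^\#\subset B_n\cap Y$ and every point of $\bigcup_n\mathrm{int}_Y(B_n\cap Y)$ regular for $\B^\#$. If $\B^\#$ were locally finite we would contradict Theorem~\ref{corson}, so it has a singular point $x_0\in Y$, necessarily with $x_0\notin\bigcup_n\mathrm{int}_Y(B_n\cap Y)$, whence $x_0\notin\mathrm{int}_X B_n$ for every $n$. Applying Lemma~\ref{lemma2} with the closed infinite-dimensional subspace $X'=Y$, this $x_0$, some $a>0$, the family $\{B_n\}_n$, and $F_n=B_n^\#\cap B(x_0,a)$ (which cover $B(x_0,a)\cap Y$, satisfy $F_n\subset B_n\cap B(x_0,a)$, and fulfil $x_0\notin\mathrm{int}B_n$) produces a singular point of $\{F_n\}_n$ different from $x_0$; as in \cite{FonfLevZan14} this is incompatible with the regularity of $\B^\#$ off $\bigcup_n\mathrm{int}_Y(B_n\cap Y)$, and the proof is complete. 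I expect the main obstacle to be exactly the non-separable reduction in (i): one has to be sure that a single translation annihilates every tangency simultaneously---this is where convexity and coercivity of $t\mapsto\mathrm{dist}(c+tv_0,Y)$ combine with $|\B|<2^{\aleph_0}$---so that Fact~\ref{density} can recover countability; after that, the closed-ball case still needs the more delicate part of the Fonf--Levin--Zanco argument (Lemma~\ref{lemma3}, Corson's theorem, and Lemma~\ref{lemma2}) to deal with the points that lie only on boundaries of balls, while for open balls in (ii) the covering $\B^\#$ is outright locally finite and Corson's theorem finishes at once.
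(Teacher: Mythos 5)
Your part (ii) is correct and is essentially the paper's argument (the paper says "proceed as in (i)", but your shortcut --- for open balls $\bigcup_n\mathrm{int}_Y B'_n=Y$, so $\B^\#$ is locally finite and Corson's Theorem~\ref{corson} finishes immediately --- is exactly what makes that case "easier"). In part (i) your countability reduction is genuinely different from the paper's: instead of translating the covering to kill tangencies, the paper passes to a \emph{minimal} subcovering of $Y$, notes that the traces split into countably many sets with nonempty relative interior plus a family of singletons whose union is a Borel subset of a Polish space of cardinality $<2^{\aleph_0}$, and invokes \cite[Theorem~3.2.7]{Srivastava} to conclude countability. Your translation trick can be made to work, but as stated it has a small hole: a convex coercive function on $\R$ can attain its \emph{minimum} value on a nondegenerate interval, so "at most two values of $t$" needs the extra observation that $t\mapsto\mathrm{dist}(c+tv_0,Y)=\|q(c)+tq(v_0)\|_{X/Y}$ cannot be constant on an interval because the quotient $X/Y$ of a uniformly rotund space is strictly convex.

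The genuine gap is in the endgame of (i). After locating a singular point $x_0$ of $\B^\#$ with $x_0\notin\mathrm{int}_X B_n$ for all $n$, you apply Lemma~\ref{lemma2} with $X'=Y$ and declare that the resulting singular point $y\neq x_0$ of $\{F_n\}$ is "incompatible with the regularity of $\B^\#$ off $\bigcup_n\mathrm{int}_Y(B_n\cap Y)$". It is not: Lemma~\ref{lemma3} only guarantees regularity \emph{on} $\bigcup_n\mathrm{int}_Y B'_n$, so $y$ is merely another point of the (closed, possibly large) singular set $S\subset\bigcup_n\partial_Y B'_n$, and no contradiction arises. The paper's proof needs two further steps that you omit: first, the Baire category theorem applied to the closed set $S$ to produce $m$, $x_0\in S$ and $a>0$ with $S\cap B(x_0,a)\subset\partial_Y B'_m$; second, the choice of a hyperplane $Y'$ of $Y$ through $x_0$ meeting $\overline{B'_m}^Y$ only at $x_0$ (this is where uniform rotundity of the trace is used), and the application of Lemma~\ref{lemma2} with $X'=Y'$ rather than $X'=Y$. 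Only then does the new singular point $y\in Y'\cap S\cap B(x_0,a)\subset Y'\cap\partial_Y B'_m=\{x_0\}$ force $y=x_0$, contradicting $y\neq x_0$. Without this localization and the tangent-hyperplane choice of the subspace, producing a second singular point proves nothing.
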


\begin{proof}
	(i) Suppose on the contrary that ${\cal B}$ is a point-finite covering of $X$ by open or closed balls, each of positive radius. By Fact~\ref{density}, we have $|\B|<2^{\aleph_0}$.

Let $Y$ be a separable infinite-dimensional closed subspace of $X$. Let us denote
$$\cal B'=\{B\cap Y;\, B\in\B,\ B\cap Y\neq\emptyset  \}.$$
Clearly $\cal B'$ is a point-finite covering of $Y$ and passing to a subcovering we can suppose that $\cal B'$ is a minimal covering of $Y$. 
  If we denote 
$$\cal C'=\{C\in\cal B';\, \mathrm{int}_Y C\neq\emptyset\},\ \ \ \cal D'=\{D\in\cal B';\, |D|=1\},$$
it is clear that $\cal B'=\cal C'\cup \cal D'$ and that $\cal C'$ is countable (since $Y$ is separable). Hence, $\bigcup \cal C'$ is a Borel subset of $Y$. By the fact that $\cal B'$ is minimal we have that $\bigcup \cal D'=Y\setminus \bigcup \cal C'$. Hence, $\bigcup \cal D'$ is a Borel subset of a Polish space such that
$|\bigcup \cal D'|<2^{\aleph_0}$. By
  \cite[Theorem~3.2.7]{Srivastava}, $|\cal D'|=|\bigcup \cal D'|\leq{\aleph_0}$, and hence $\cal B'$ is countable. Let $\cal B'=\{B'_n\}_{n\in\N}$ and suppose that, for each $n\in\N$, $B'_n=B_n\cap Y$ for some $B_n\in\cal B$.
  Now, we proceed as in the proof of  \cite[Theorem~1.5]{FonfLevZan14}.
Consider the covering ${\cal B}^\#$ of $Y$ from Lemma \ref{lemma3} and let
$S\subset Y$ be the set of the points that are singular for ${\cal B}^\#$. By Theorem~\ref{corson}, we have
$S \neq \emptyset $ and, by Lemma~\ref{lemma3}, we have  $S\subset \cup_n  \partial_Y B'_n$.
Since $S$ is closed in $Y$,  by the Baire category theorem, there 
are $m\in\N$, $x_0 \in S$, and  $a>0$  such that
$S \cap B(x_0,a) \subset \partial_Y B'_m$.
Observe that we have two possibilities: $B'_m$ is a singleton or $\overline{B'_m}^Y$ is a  rotund body in $Y$. In any case, there exists a closed hyperplane $Y'$ in $Y$ passing 
through $x_0$ and intersecting $\overline{B'_m}^Y$ only at $x_0$. Then, by applying Lemma~\ref{lemma2} to the families
${\cal F}= \{ B^\#_n \cap B(x_0,a);\, B^\#_n \in {\cal B}^\#\}$ and $\{B_n\}_{n\in\N}$,
 with respect to the subspace $Y'$, we get a contradiction.
 
 The proof of (ii) is similar but easier. Indeed, observe that if $Y$ and $\cal B'$ are defined as above then we clearly have that  $\cal B'$ is countable and then we can proceed as in the previous point.
\end{proof}

\noindent In the non-separable case,  non-existence of coverings by balls satisfying certain condition,   were recently proved in the papers  \cite{DEVETIL, DESOVESTAR}.
  In \cite{DESOVESTAR}, the authors showed that {\em if $X$ is LUR or uniformly smooth then it does not admit star-finite coverings by closed balls, each of positive radius} (we recall that a family of sets is called star-finite if each of its members intersects only finitely many other members of the family). 
The results contained in \cite{DEVETIL} imply that  {\em if $X$ is LUR or Fr\'echet smooth then it does not admit tilings by closed balls}. 
However, the following problem remains open, even in the case $X$ is a Hilbert space.

\begin{problem} Is it possible to generalize (i) in  Theorem~\ref{teo:unifrotundsmooth},  to the case $\mathrm{dens}(X)\geq 2^{\aleph_0}$?
\end{problem}

\section*{Acknowledgment} The research of the author is partially
supported by GNAMPA-INdAM, Project GNAMPA 2020.
The author would like to thank J.~Somaglia, L.~Vesel\'y, and C.~Zanco for many  discussions on the subject and for useful comments and remarks that helped him in preparing this paper.

\end{document}